\newtheorem*{maintheorem*}{Main Theorem}
\newtheorem{theorem}{Theorem}[section]
\newtheorem{prop}[theorem]{Proposition}
\newtheorem{question}[theorem]{Question}
\newtheorem{lemma}[theorem]{Lemma}
\newtheorem{cor}[theorem]{Corollary}
\theoremstyle{definition}
\newtheorem{definition}[theorem]{Definition}
\newtheorem{example}[theorem]{Example}
\numberwithin{equation}{section}
\newcommand{\cc}{\mathbb{C}}
\newcommand{\ff}{\mathbb{F}}
\newcommand{\nn}{\mathbb{N}}
\newcommand{\qq}{\mathbb{Q}}
\newcommand{\rr}{\mathbb{R}}
\newcommand{\zz}{\mathbb{Z}}
\newcommand{\gp}{\text{gp}}
\newcommand{\pp}{\mathbb{P}}
\newcommand{\ii}{\mathcal{A}}
\newcommand{\uu}{\mathcal{U}}
\providecommand\ldb{\llbracket}
\providecommand\rdb{\rrbracket}
\keywords{atomic domain, idf-domains, Furstenberg domain, divisibility, atomicity}
\subjclass[2010]{Primary: 13F15, 13A05; Secondary: 20M13, 13F05}
\begin{document}
	
	\mbox{}
	\title{Integral domains and the IDF property}
	
	\author{Felix Gotti}
	\address{Department of Mathematics\\MIT\\Cambridge, MA 02139}
	\email{fgotti@mit.edu}
	
	\author{Muhammad Zafrullah}
	\address{Department of Mathematics\\Idaho State University\\Pocatello, ID 83209}
	\email{mzafrullah@usa.net}
	
\date{\today}

\begin{abstract}
	 An integral domain $D$ is called an irreducible-divisor-finite domain (IDF-domain) if every nonzero element of $D$ has finitely many irreducible divisors up to associates. The study of IDF-domains dates back to the seventies. In this paper, we investigate various aspects of the IDF property. In 2009, P.~Malcolmson and F. Okoh proved that the IDF property does not ascend from integral domains to their corresponding polynomial rings, answering a question posed by D. D. Anderson, D. F. Anderson, and the second author two decades before. Here we prove that the IDF property ascends in the class of PSP-domains, generalizing the known result (also by Malcolmson and Okoh) that the IDF property ascends in the class of GCD-domains. We put special emphasis on IDF-domains where every nonunit is divisible by an irreducible, which we call TIDF-domains, and we also consider PIDF-domains, which form a special class of IDF-domains introduced by Malcolmson and Okoh in 2006. We investigate both the TIDF and the PIDF properties under taking polynomial rings and localizations. We also delve into their behavior under monoid domain and $D+M$ constructions.
\end{abstract}
\medskip

\dedicatory{Dedicated to the memory of Robert Gilmer, \\ the most influential figure in Multiplicative Ideal Theory in recent past.}

\maketitle


\bigskip
\section{Introduction}
\label{sec:intro}

Following A. Grams and H. Warner~\cite{GW75}, we say that an integral domain $D$ is an IDF-domain provided that every nonzero element of $D$ has only finitely many irreducible divisors up to associates. Back in 1990, D. D. Anderson, D. F. Anderson, and the second author characterized atomic IDF-domains as integral domains with elements having only finitely many factorizations into irreducibles \cite[Theorem~5.1]{AAZ90}, and they called such domains finite factorization domains (or FFDs). Although they proved that the finite factorization property ascends to polynomial rings, it was not clear by then whether the IDF property (or the atomic property) ascended to polynomial rings. The corresponding question was posed as \cite[Question~2]{AAZ90}, and it was not until two decades later that P. Malcolmson and F. Okoh \cite[Theorem~2.5]{MO09} provided a negative answer. The fundamental purpose of this paper is to investigate whether the IDF property (and some stronger properties) are preserved under various standard algebraic constructions, including polynomial rings, $D+M$ constructions, and localizations.
\smallskip

Although, in general, the IDF property does not ascend to polynomial rings, there are certain relevant classes of integral domains where the IDF property ascends. For instance, if a GCD-domain $D$ is an IDF-domain, then $D[X]$ is an IDF-domain (and a GCD-domain); this was proved in \cite[Theorem~1.9]{MO09} (the GCD part is a special case of \cite[Theorem~6.4]{GP74}). More general, the second author proved in~\cite{mZ17} that the IDF property ascends in the class of pre-Schreier domains (although this result was never published). An integral domain where every nonempty finite subset of nonzero elements has finitely many maximal common divisors up to associates is called an MCD-finite domain. It is clear that every GCD-domain is an MCD-finite domain, and it is not hard to verify that every pre-Schreier domain is an MCD-finite domain. More recently, S. Eftekhari and M.~R. Khorsandi \cite[Theorem~2.1]{EK18} proved that the IDF property ascends in the class of MCD-finite domains.
\smallskip

Following J. T. Arnold and P. B. Sheldon \cite{AS75}, we say that an integral domain $D$ is a PSP-domain if every primitive polynomial $f$ over $D$ is super-primitive; that is, if the ideal $A_f$ generated by the coefficients of $f$ is contained in no proper principal ideal of $D$, then $A_f^{-1} = D$. In Section~\ref{sec:UTZ ideals and ascending of IDF in PSP and pre-Schreier domains}, we prove that if a PSP-domain $D$ satisfies the IDF property, then $D[X]$ also satisfies the IDF property. This result improves upon the fact that the IDF property ascends in the class of pre-Schreier domains~\cite{mZ17} (although never published, the ascent of the IDF property for PSP-domains was also pointed out by the second author at the end of~\cite{mZ17}). In order to establish the ascent of the IDF property for PSP-domains, prime upper to zero ideals will be used as combinatorial tools. In Section~\ref{sec:UTZ ideals and ascending of IDF in PSP and pre-Schreier domains}, we also show how prime upper to zero ideals can be used to characterize  UMT-domains (i.e., integral domains whose prime upper to zero ideals are all maximal $t$-ideals).
\smallskip

Following P. L. Clark~\cite{pC17}, we say that an integral domain~$D$ is a Furstenberg domain if every nonunit of~$D$ is divisible by an irreducible. In Section~\ref{sec:TIDF-domains}, we introduce tightly irreducible-divisor-finite domains, and we study them throughout the rest of the paper. We say that an integral domain is a tightly irreducible-divisor-finite domain (or a TIDF-domain) if it is a Furstenberg IDF-domain. It follows from \cite[Theorem~5.1]{AAZ90} that every FFD is a TIDF-domain. We begin Section~\ref{sec:TIDF-domains} characterizing FFDs in terms of the TIDF property. In addition, we use the TIDF property to characterize UFDs inside the class of AP-domains (i.e., integral domains where every irreducible is prime). We prove in the same section that the TIDF property ascends in the class of PSP-domains and in the class of MCD-finite domains. Finally, we give some examples showing that even if a reduced torsion-free monoid $M$ satisfies the TIDF property, its monoid domain $\qq[M]$ may not be a TIDF-domain.
\smallskip

The $D+M$ construction, which is a useful source of (counter)examples in commutative ring theory, was introduced and first studied by R. Gilmer in~\cite[Appendix II]{rG68} in the context of valuation domains. In addition, the $D+M$ construction in the more general context of integral domains was later investigated by J. Brewer and E.~A. Rutter~\cite{BR76} and also by D. Costa, J. L. Mott, and the second author in~\cite{CMZ78}. In Section~\ref{sec:D+M}, we study the TIDF property through the lens of the $D+M$ construction, establishing some results parallel to those given in \cite[Section~4]{AAZ90} for the IDF property. Following Malcolmson and Okoh~\cite{MO06}, we say that an integral domain~$D$ is a powerful IDF-domain\footnote{We have slightly changed the terminology in~\cite{MO06} to achieve parallelism between the terms `PIDF-domain' and `TIDF-domain'.} (or a PIDF-domain) if for every nonzero $x \in D$ the set $\bigcup_{n \in \nn} \mathsf{D}(x^n)$ is finite up to associates, where $\mathsf{D}(x^n)$ denotes the set of irreducible divisors of $x^n$ in $D$. It is clear that the class of PIDF-domains is a subclass of that consisting of all IDF-domains. In Section~\ref{sec:D+M}, we also investigate how the PIDF property behaves under the $D+M$ construction.
\smallskip

The IDF property is not preserved under localization (see, for instance, \cite[Example~2.3]{AAZ92}). The behavior of the IDF property under localization was first studied by Anderson, Anderson, and the second author in~\cite{AAZ92}, where they found sufficient conditions on a multiplicative subset $S$ of an integral domain~$D$ to ensure that the IDF property transfers from~$D$ to the localization~$D_S$ (see \cite[Theorem~2.1]{AAZ92}). On the other hand, the authors established a Nagata-type result, finding conditions on the multiplicative set~$S$ under which the IDF property transfers from the localization $D_S$ to $D$. In Section~\ref{sec:localization}, we discuss the behavior of both the TIDF and the PIDF properties under localization, expanding to TIDF-domains and PIDF-domains the fundamental results about IDF-domains given in \cite[Theorem~2.1]{AAZ92} and \cite[Theorem~3.1]{AAZ92}.

\bigskip
\section{Background}
\label{sec:background}

\medskip
\subsection{General Notation}

Following standard notation, we let $\zz$, $\qq$, and $\rr$ denote the sets of integers, rational numbers, and real numbers, respectively. In addition, we let $\pp$, $\nn$ and $\nn_0$ denote the sets of primes, positive integers, and nonnegative integers, respectively. 
For $a,b \in \zz$, we let $\ldb a,b \rdb$ denote the discrete interval $\{n \in \zz \mid a \le n \le b\}$, allowing $\ldb a,b \rdb$ to be empty when $a > b$. In addition, given $S \subseteq \rr$ and $r \in \rr$, we set $S_{\ge r} = \{s \in S \mid s \ge r\}$ and $S_{> r} = \{s \in S \mid s > r\}$. For $q \in \qq \setminus \{0\}$, we let $\mathsf{n}(q)$ and $\mathsf{d}(q)$ denote, respectively, the unique $n \in \nn$ and $d \in \zz$ such that $q = n/d$ and $\gcd(n,d) = 1$. Accordingly, for any $Q \subseteq \qq \setminus \{0\}$, we set $\mathsf{n}(Q) = \{\mathsf{n}(q) \mid q \in Q\}$ and $\mathsf{d}(Q) = \{\mathsf{d}(q) \mid q \in Q\}$. Finally, for each $p \in \pp$ and $n \in \zz \setminus \{0\}$, we let $v_p(n)$ denote the maximum $v \in \nn_0$ such that $p^v$ divides $n$, and for $q \in \qq \setminus \{0\}$, we set $v_p(q) = v_p(\mathsf{n}(q)) - v_p(\mathsf{d}(q))$ (in other words, $v_p$ is the $p$-adic valuation map of~$\qq$).

\medskip
\subsection{Monoids}

A \emph{monoid} is a semigroup with an identity element. In the scope of this paper, we tacitly assume that monoids are both cancellative and commutative. The most fundamental role played by monoids in this paper is as multiplicative monoids of integral domains and, accordingly, unless we specify otherwise, we will write monoids multiplicatively. The second relevant role played by monoids here is as exponents of monoid rings, and in this setting we use additive notation as Gilmer does in~\cite{rG84}. Let $M$ be a monoid. 
An element $b \in M$ is \emph{invertible} if there exists $c \in M$ such that $b c = 1$. We let $\mathcal{U}(M)$ denote the group of invertible elements of $M$. When $M$ is the multiplicative monoid of an integral domain $D$, the invertible elements of $M$ are precisely the units of $D$, in which case, we will use the standard notation $D^\times$ instead of $\mathcal{U}(D \setminus \{0\})$. We let $M_{\text{red}}$ denote the quotient monoid $M/\mathcal{U}(M)$. The monoid $M$ is \emph{reduced} if $\mathcal{U}(M)$ is the trivial group, in which case, $M$ is naturally isomorphic to $M_{\text{red}}$. The \emph{quotient group} of $M$ (or \emph{difference group} of $M$ if $M$ is additively written), denoted by $\gp(M)$, is the unique abelian group up to isomorphism satisfying that any abelian group containing a homomorphic image of $M$ will also contain a homomorphic image of $\gp(M)$. The monoid $M$ is \emph{torsion-free} if $\gp(M)$ is a torsion-free group (or equivalently, for all $b,c \in M$, if $b^n = c^n$ for some $n \in \nn$, then $b=c$).
\smallskip

For a subset $S$ of $M$, we let $\langle S \rangle$ denote the submonoid of $M$ generated by $S$, that is, the smallest (under inclusion) submonoid of $M$ containing $S$. An \emph{ideal} of $M$ is a subset $I$ of $M$ such that $I M \subseteq I$ (or, equivalently, $I M = I$). An ideal $I$ of $M$ is \emph{principal} if there exists $b \in M$ satisfying $I = bM$. For $b_1, b_2 \in M$, we say that $b_2$ \emph{divides} $b_1$ in $M$ and write $b_2 \mid_M b_1$ if $b_1 M \subseteq b_2 M$, and we say that~$b_1$ and~$b_2$ are \emph{associates} if $b_1 M = b_2 M$. The monoid $M$ is a \emph{valuation monoid} if for any $b_1, b_2 \in M$ either $b_1 \mid_M b_2$ or $b_2 \mid_M b_1$. A submonoid $N$ of $M$ is \emph{divisor-closed} if for any $b \in N$ and $d \in M$, the relation $d \mid_M b$ implies that $d \in N$. The monoid $M$ satisfies the \emph{ascending chain condition on principal ideals} (\emph{ACCP}) if every increasing sequence (under inclusion) of principal ideals eventually terminates. An element $a \in M \! \setminus \! \uu(M)$ is an \emph{atom} (or an \emph{irreducible}) if whenever $a = u v$ for some $u,v \in M$, then either $u \in \uu(M)$ or $v \in \uu(M)$. We let $\mathcal{A}(M)$ denote the set of atoms of~$M$. The monoid~$M$ is \emph{atomic} if every non-invertible element factors into atoms. One can readily check that every monoid satisfying ACCP is atomic.

\medskip
\subsection{Factorizations}

It is clear that $M$ is atomic if and only if $M_{\text{red}}$ is atomic. Let $\mathsf{Z}(M)$ denote the free (commutative) monoid on $\mathcal{A}(M_{\text{red}})$, and let $\pi \colon \mathsf{Z}(M) \to M_\text{red}$ be the unique monoid homomorphism fixing the set $\mathcal{A}(M_{\text{red}})$. For each $b \in M$, we set
\[	
	\mathsf{Z}(b) = \mathsf{Z}_M(b) = \pi^{-1} (b  \uu(M)),
\]
and call the elements of $\mathsf{Z}(b)$ \emph{factorizations} of~$b$. Observe that $M$ is atomic if and only if $\mathsf{Z}(b)$ is nonempty for any $b \in M$. The monoid $M$ is a \emph{finite factorization monoid} (\emph{FFM}) if it is atomic and $|\mathsf{Z}(b)| < \infty$ for every $b \in M$. In addition, $M$ is a \emph{unique factorization monoid} (\emph{UFM}) if $|\mathsf{Z}(b)| = 1$ for every $b \in M$. By definition, every UFM is an FFM. If $z = a_1   \cdots  a_\ell \in \mathsf{Z}(M)$ for some $a_1, \dots, a_\ell \in \mathcal{A}(M_{\text{red}})$, then $\ell$ is the \emph{length} of the factorization $z$ and is denoted by $|z|$. For each $b \in M$, we set
\[
	\mathsf{L}(b) = \mathsf{L}_M(b) = \{ |z| \mid z \in \mathsf{Z}(b) \}.
\]
The monoid $M$ is a \emph{bounded factorization monoid} (\emph{BFM}) if it is atomic and $|\mathsf{L}(b)| < \infty$ for all $b \in M$. Clearly, every FFM is a BFM. On the other hand, the reader can verify that every BFM satisfies ACCP (see \cite[Corollary~1.3.3]{GH06}).
\smallskip

For the rest of this section, let $D$ be an integral domain. We let $D^*$ denote the multiplicative monoid $D \setminus \{0\}$ of $D$. A subring~$S$ of $D$ is a \emph{divisor-closed subring} of $D$ provided that $S^*$ is a divisor-closed submonoid of $D^*$. Every factorization property defined for monoids in the previous paragraph can be adapted to integral domains in a natural way. The integral domain $D$ is a \emph{unique} (resp., \emph{finite}, \emph{bounded}) \emph{factorization domain} provided that $D^*$ is a unique (resp., finite, bounded) factorization monoid. Accordingly, we use the acronyms UFD, FFD, and BFD. Observe that this new definition of a UFD coincides with the standard definition of a UFD. 
As for monoids, we let $\mathcal{A}(D)$ denote the set of irreducibles of $D$ (for integral domains, the term `irreducible' is somehow more standard than the term `atom'). A generous recompilation of the advances in factorization theory in atomic monoids and domains until 2006 is provided in~\cite{GH06} by A. Geroldinger and F. Halter-Koch.

\medskip
\subsection{Integral Domains and Monoid Domains}

As mentioned in the introduction, the primary purpose of this paper is to provide fundamental results on integral domains where each nonzero element has only finitely many irreducible divisors up to associates. For each $r \in D$, we set
\[
	\mathsf{D}(r) := \{a D^\times \mid a \in \mathcal{A}(D) \text{ and } a \mid_D r\}.
\]
Then we say that $D$ is an \emph{irreducible-divisor-finite domain}, or an \emph{IDF-domain} for short, if $\mathsf{D}(r)$ is finite for every $r \in D^*$. These integral domains were introduced and first investigated by Grams and Warner in~\cite{GW75}. The integral domain~$D$ is a \emph{Furstenberg domain} if every nonunit of $D^*$ is divisible by an irreducible. Now we introduce the following definitions, which are central in this paper.

\begin{definition}
	Let $D$ be an integral domain. 
	\begin{itemize}
		\item We say that~$D$ is a \emph{powerful IDF-domain} (or a \emph{PIDF-domain}) if for every nonzero $r \in D$ the set $\bigcup_{n \in \nn} \mathsf{D}(r^n)$ is finite.
		\smallskip
		
		\item We say that an integral domain is a \emph{tightly IDF-domain} (or a \emph{TIDF-domain}) if it is a Furstenberg IDF-domain.
	\end{itemize}
\end{definition}
\smallskip

We proceed to recall further classes of integral domains generalizing GCD-domains (and so UFDs) that will show up in coming sections. The integral domain $D$ is an \emph{atom-prime domain} (or an \emph{AP-domain}) provided that every irreducible of $D$ is prime. An element $r \in D$ is \emph{primal} if for any $s_1, s_2 \in D$, the relation $r \mid_D s_1 s_2$ guarantees the existence of $r_1, r_2 \in D$ with $r = r_1 r_2$ such that $r_1 \mid_D s_1$ and $r_2 \mid_D s_2$. Following \cite{mZ87}, we say that $D$ is a \emph{pre-Schreier domain} if every $r \in D^*$ is \emph{primal}. Every pre-Schreier domain is an AP-domain. Following \cite{pC68}, we say that an integral domain $D$ is a \emph{Schreier domain} if $D$ is an integrally closed pre-Schreier domain (it is worth noting that the notion of a pre-Schreier domain was motivated by that of a Schreier domain). It is well known that every GCD-domain is a Schreier domain.
\smallskip

Assume now that $M$ is a torsion-free monoid. Following the terminology in~\cite{rG84}, we let $D[M]$ denote the monoid ring of $M$ over $D$, that is, the ring consisting of all polynomial expressions in an indeterminate~$X$ with exponents in $M$ and coefficients in $D$. It follows from \cite[Theorem~8.1]{rG84} that $D[M]$ is an integral domain (as $M$ is torsion-free and, by definition, cancellative). Accordingly, we often call $D[M]$ a \emph{monoid domain}. In addition, it follows from \cite[Theorem~11.1]{rG84} that
\[
	D[M]^\times = \{rX^u \mid r \in D^\times \text{ and } u \in \uu(M)\}.
\]
In light of \cite[Corollary~3.4]{rG84}, we can assume, without loss of generality, that $M$ is a totally ordered monoid, and we do so. Let $f := c_n X^{q_n} + \dots + c_1 X^{q_1}$ be a nonzero element in $D[M]$ for some coefficients $c_1, \dots, c_n \in D^*$ and exponents $q_1, \dots, q_n \in M$ satisfying $q_n > \dots > q_1$. Then $\deg f := \deg_{D[M]} f := q_n$ and $\text{ord} \, f := \text{ord}_{D[M]} \, f := q_1$ are the \emph{degree} and the \emph{order} of $f$, respectively. In addition, the set $\text{supp} \, f := \text{supp}_{D[M]}(f(x)) := \{q_1, \dots, q_n\}$ is the \emph{support} of $f$. Gilmer in~\cite{rG84} provided a fair exposition of the advances in commutative monoid rings until 1984.

\medskip
\subsection{$t$-ideals}

Let $K$ be the quotient field of $D$, and let $F(D)$ denote the set of nonzero fractional ideals of~$D$. For a fractional ideal $A \in F(D)$, let $A^{-1}$ denote the fractional ideal
\[
	(D :_K A) = \{ x \in K \mid xA \subseteq D \}.
\]
The function $A \mapsto A_v := (A^{-1})^{-1}$ on $F(D)$ is the \emph{$v$-operation} on $D$. Associated to the $v$-operation is the \emph{$t$-operation} on $F(D)$: for each fractional ideal $A$ of $D$,
\[
	A \mapsto A_t := \bigcup \big\{ J_v \mid J \in F(D) \text{ is finitely generated and } J \subseteq A \big\}.
\]
For each fractional ideal $A$ of $D$, we see that $A \subseteq A_t \subseteq A_v$. The $v$-operation and the $t$-operation are examples of the so called star operations (see \cite[Sections~32 and~34]{rG72} or \cite[Chapter~1]{jE19}). A fractional ideal $A \in F(D)$ is a \emph{$v$-ideal} (resp., a \emph{$t$-ideal}) if $A = A_v$ (resp., $A = A_t)$. A $t$-ideal maximal among $t$-ideals is a prime ideal called a \emph{maximal $t$-ideal}. If $A$ is a nonzero ideal with $A_t \neq D$, then $A$ is contained in at least one maximal $t$-ideal. A prime ideal that is also a $t$-ideal is a \emph{prime $t$-ideal}. A fractional ideal $A \in F(D)$ is \emph{$v$-invertible} (resp., \emph{$t$-invertible}) if $(A A^{-1})_v = D$ (resp., $(A A^{-1})_t = D)$. A prime $t$-ideal that is also $t$-invertible is a maximal $t$-ideal \cite[Proposition~1.3]{HZ t-inv}. The ideals $A_1, \dots, A_n$ of $D$ are $t$-\emph{comaximal} if $(A_1, \dots, A_n)_t = D$, where $(A_1, \dots, A_n)$ denotes the ideal generated by $A_1, \dots, A_n$, and an ideal $B$ of $D$ is the \emph{$t$-product} of $A_1, \dots, A_n$ if $B = (A_1 \cdots A_n)_t$. The integral domain~$D$ is a \emph{Pr\"ufer $v$-multiplication domain} (or a \emph{PVMD}) if every nonzero finitely generated ideal of~$D$ is $t$-invertible. It is well known that the class of PVMDs contains all GCD-domains, Krull domains, and Pr\"ufer domains.
\smallskip

The class of PSP-domains is a generalization of that consisting of GCD-domains, and it will play an important role in the next section. The \emph{content} $A_f$ of a polynomial $f \in K[X]$ is the fractional ideal of $D$ generated by the coefficients of $f$. Let $f$ be a polynomial in $D[X]$. Then $f$ is \emph{primitive} (\emph{over} $D$) provided that the ideal $A_f$ is not contained in any proper principal ideal of $D$. On the other hand, $f$ is \emph{super-primitive} (\emph{over} $D$) if $(A_f)_v = D$; that is, $A_f^{-1} = D$. By \cite[Theorem~C]{hT72}, every super-primitive polynomial is primitive. The integral domain $D$ is a \emph{PSP-domain} if every primitive polynomial over $D$ is super-primitive. Every pre-Schreier domain is a PSP-domain (this is \cite[Lemma~2.1]{Z WB}, which although stated for Schreier domains is proved only using properties characterizing pre-Schreier domains). On the other hand, it follows from \cite[Theorem~F]{hT72} and \cite[Proposition~4.1]{AS75} that every PSP-domain is an AP-domain.

\bigskip
\section{Ascent on PSP-domains and Prime Upper to Zero Ideals}
\label{sec:UTZ ideals and ascending of IDF in PSP and pre-Schreier domains}

Assume throughout this section that $D$ is an integral domain with quotient field $K$. A prime ideal $P$ of $D[X]$ is called a \emph{prime upper to zero} of $D$ if $P \cap D = (0)$. Observe that a prime ideal $P$ of $D[X]$ is a prime upper to zero ideal if and only if $P = h(X)K[X] \cap D[X]$ for some irreducible/prime polynomial $h \in K[X]$. Since prime upper to zero ideals have height one, they are $t$-ideals.

\medskip
\subsection{Ascent of the IDF Property on PSP-domains}

The primary purpose of this subsection is to show that the IDF property ascends in the class of PSP-domains; that is, if a PSP-domain $D$ is an IDF-domain, then the polynomial ring $D[X]$ is also an IDF-domain.
\smallskip

It follows from \cite[Theorem 1.4]{HZ t-inv} that a prime upper to zero ideal $P$ of $D$ is a maximal $t$-ideal if and only if~$P$ is $t$-invertible, and this happens precisely when~$P$ contains a polynomial $f$ that is super-primitive over~$D$, which means that $(A_f)_v = D$. Based on these observations, it was concluded in~\cite{GMZ} that if $f$ is a super-primitive polynomial over $D$, then $f(X)D[X]$ is a $t$-product of prime uppers to zero ideals. Using the same fact, the authors in~\cite{GMZ} also concluded that $f(X)D[X]$ is a $t$-product of maximal $t$-ideals. An element $a \in D$ is called a \emph{$t$-invertibility element} if every ideal of $D$ containing~$a$ is $t$-invertible. It was proved in \cite[Theorem~1.3]{GMZ} that $a \in D$ is a $t$-invertibility element if and only if $Da$ is a $t$-product of maximal $t$-ideals of $D$. Our next result makes this conclusion somewhat more obvious. Yet, before we state the following lemma, we note that every non-constant polynomial in $D[X]$ belongs to at most finitely many prime upper to zero ideals, some of which may be $t$-invertible.

\begin{lemma} [Upper-to-zero Representation Lemma]\label{Lemma A}
	Let $f \in D[X]$ be a non-constant polynomial and let $P_1, \dots, P_n$ be the only prime uppers to zero ideals containing $f$ that are maximal $t$-ideals. Then the following statements hold.
	\begin{enumerate}
		\item $f(X)D[X] = (AP_{1}^{r_{1}} \cdots P_{n}^{r_{n}})_{t}$ for some ideal $A$ and $r_1, \dots, r_n \in \nn$ such that $A$ and $P_1^{r_1} \cdots P_n ^{r_n}$ are $t$-comaximal.
		\smallskip
		
		\item If $f$ is super-primitive, then $f(X)D[X] = (P_1^{r_1} \cdots P_n^{r_n})_t$.
		\smallskip
		
		\item $f$ has at most finitely many super-primitive divisors.
	\end{enumerate}
\end{lemma}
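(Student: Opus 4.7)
The plan is to leverage the characterization (from \cite[Theorem~1.4]{HZ t-inv}, recalled in the paragraph preceding the lemma) that a prime upper to zero ideal of $D[X]$ is a maximal $t$-ideal iff it is $t$-invertible iff it contains a super-primitive polynomial, together with the result from~\cite{GMZ} (also recalled) that every super-primitive polynomial generates a $t$-product of such ideals. The $K[X]$-factorization of $f$ will track the multiplicities.

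First I would factor $f$ in $K[X]$ as $f = c\, h_1^{r_1} \cdots h_m^{r_m}$, with $c \in K^\times$ and pairwise non-associate monic irreducibles $h_1, \dots, h_m \in K[X]$, labelled so that $P_i := h_iK[X] \cap D[X]$ is a maximal $t$-ideal of $D[X]$ exactly for $i \in \ldb 1, n \rdb$; these exhaust the prime uppers to zero of $D[X]$ containing $f$. For each $i \in \ldb 1, n \rdb$, the $t$-invertibility of the height-one prime $P_i$ makes $D[X]_{P_i}$ a DVR whose valuation $v_{P_i}$ restricts on $K[X]$ to the $h_i$-adic valuation; hence $v_{P_i}(f) = r_i$, and the $r_i$ in the statement of part~(1) will be these multiplicities.

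For part~(1), I would set $A := \bigl( f(X) D[X] \cdot (P_1^{r_1} \cdots P_n^{r_n})^{-1} \bigr)_t$, which is integral because $v_{P_i}(f) = r_i$ for every $i \in \ldb 1, n \rdb$, and which satisfies $f(X) D[X] = (A\, P_1^{r_1} \cdots P_n^{r_n})_t$ by the $t$-invertibility of the $P_i$'s. To prove $t$-comaximality of $A$ and $P_1^{r_1} \cdots P_n^{r_n}$, I would suppose a maximal $t$-ideal $Q$ of $D[X]$ contained both. Primality forces $Q \supseteq P_i$ for some $i$, and maximality of $P_i$ among $t$-ideals then gives $Q = P_i$; but computing at the DVR $D[X]_{P_i}$ yields $v_{P_i}(A) + r_i = v_{P_i}(f) = r_i$, so $v_{P_i}(A) = 0$, contradicting $A \subseteq P_i$.

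Part~(2) is then immediate: if $f$ is super-primitive, the result from \cite{GMZ} already writes $f(X)D[X]$ as a $t$-product of prime uppers to zero that are maximal $t$-ideals, and comparing with the decomposition from part~(1) forces $A = D[X]$. For part~(3), if $g \in D[X]$ is a super-primitive divisor of $f$, then by part~(2) we have $g(X)D[X] = (Q_1^{s_1} \cdots Q_k^{s_k})_t$ with each $Q_j$ a prime upper to zero maximal $t$-ideal containing $g$; since $g \mid_{D[X]} f$, each $Q_j$ lies in $\{P_1, \dots, P_n\}$, and a valuation computation at $D[X]_{Q_j}$ yields $s_j \le v_{Q_j}(f)$, so $g(X)D[X]$ has at most $\prod_{i=1}^n (r_i + 1)$ possibilities, bounding the number of super-primitive divisors of $f$ up to associates. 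The main obstacle is the $t$-comaximality verification in part~(1), which I handle by reducing to the valuation computation at the DVR $D[X]_{P_i}$; this in turn rests on identifying $D[X]_{P_i}$ with $K[X]_{(h_i)}$, a consequence of $P_i$ being a $t$-invertible height-one prime of $D[X]$.
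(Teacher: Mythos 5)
Your argument is correct, but it takes a genuinely different route from the paper's in each part. For part~(1) the paper simply imports the decomposition from the proof of \cite[Proposition~3.7]{CDZ}, whereas you build $A$ explicitly as $\big(f(X)D[X]\,(P_1^{r_1}\cdots P_n^{r_n})^{-1}\big)_t$ and check integrality and $t$-comaximality by localizing at the $P_i$; this is essentially the one-prime-at-a-time extraction the paper itself performs in Lemma~\ref{Lemma C1}, and it makes the lemma self-contained. The only step you leave implicit is the passage from the local conditions $v_{P_i}(f)\ge r_i$ to the global containment $f\in (P_1^{r_1}\cdots P_n^{r_n})_v$, which needs the standard fact that a $t$-invertible $t$-ideal is the intersection of its localizations at the maximal $t$-ideals (together with the observation that no maximal $t$-ideal other than the $P_i$ contains the product). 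For part~(2) the roles reverse: the paper derives it from part~(1) by a contradiction argument using super-primitivity and the dichotomy that a maximal $t$-ideal of $D[X]$ is either an upper to zero or of the form $(M\cap D)[X]$ \cite[Proposition~1.1]{HZ t-inv}, while you invoke the decomposition from~\cite{GMZ} and match exponents by valuations. This is logically sound --- indeed your exponent matching yields part~(2) outright, with no need to compare against part~(1) --- but it leans on the very conclusion the lemma is advertised as making ``somewhat more obvious,'' so the paper's self-contained derivation is preferable in context. For part~(3) you replace the paper's $t$-divisor cancellation computation by the inequalities $s_j\le v_{Q_j}(f)$ at the DVRs $D[X]_{Q_j}$, reaching the same bound $\prod_{i=1}^n(r_i+1)$ on non-associate super-primitive divisors; this is the cleaner of the two arguments. (A small remark: $D[X]_P\cong K[X]_{(h)}$ is a DVR for \emph{every} prime upper to zero $P=h(X)K[X]\cap D[X]$; the $t$-invertibility of the $P_i$ is needed only so that the $t$-inverses behave as you use them, not for the DVR identification itself.)
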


\begin{proof}
	(1) The proof of this part can be taken from the proof of \cite[Proposition~3.7]{CDZ}.
	\smallskip
	
	(2) Let $A$ be as in part~(1). Assume, by way of contradiction, that $A_t \neq D[X]$. Then $A$ is contained in a maximal $t$-ideal $M$. In this case, $f \in M$ and $M$ is $t$-invertible because $f$ is super-primitive. Now the fact that the only $t$-invertible maximal $t$-ideals containing $f$ are $P_1, \dots, P_n$ implies that $M \cap D \neq (0)$, whence $M = (M \cap D)[X]$ by \cite[Proposition~1.1]{HZ t-inv}. However, $f \in A \subseteq M$ implies that $D \subseteq M$, which generates a contradiction. Consequently, $A_t = D$, and so
	\[
		f(X)D[X] = (AP_1^{r_1} \cdots P_n^{r_n})_t = (A_t P_1^{r_1} \cdots P_n^{r_n})_t =(P_1^{r_1} \cdots P_n^{r_n})_t.
	\]
	
	(3) Let us call an ideal $I$ a $t$-\emph{divisor} of an ideal $A$ if there is an ideal $J$ such that $A=(IJ)_{t}$. Write $f(X) D[X] = (AP_1^{r_1} \cdots P_n^{r_n})_t$ as in part~(1). Then the proper ideals $P_1^{a_1} \cdots P_n^{a_n}$, where $0 \le a_i \le r_i$ for every $i \in \ldb 1,n \rdb$, are $t$-divisors of $f(X) D[X]$, and they only $t$-divide $P_1^{r_1} \cdots P_n^{r_n}$. Indeed, if $A,B,C$ are ideals of $D$ such that $(A,B)_t = D$ and $A_t \supseteq (BC)_t$, then $A_t \supseteq C_t$: this is because $A_t \supseteq (BC)_{t}$ if and only if
	\[
		A_t = (A,BC)_t = (A,AC,BC)_t = (A,(A,B)C)_t = (A,(A,B)_tC)_t = (A,C)_t,
	\]
	which implies that $A_t \supseteq C_t$. Observe that the inclusion $(P_1^{a_1} \cdots P_n^{a_n})_t \supseteq (A P_1^{r_1} \cdots P_n^{r_n})_t$, along with the fact that the ideals~$A$ and $P_1^{a_1} \cdots P_n^{a_n}$ share no maximal $t$-ideals, guarantees that the inclusion $(P_1^{a_1} \cdots P_n^{a_n})_t \supseteq (P_1^{r_1} \cdots P_n^{r_n})_t$ holds. Now the number of proper $t$-divisors of $(P_1^{r_1} \cdots P_n^{r_n})_t$ is less than $\prod_{i=1}^n (r_i + 1)$ and hence finite. Let $h$ be a super-primitive divisor of $f$ in $D[X]$, and write $f(X) = g(X)h(X)$ for some $g \in D[X]$. It follows from part~(2) that $h(X)D[X] = (P_1^{a_1} \cdots P_n^{a_n})_t$ and, therefore, the equality $(P_1^{r_1} \cdots P_n^{r_n})_t = (P_1^{a_1} \cdots P_n^{a_n})_t (g(X))$ holds. After multiplying both sides by $(P_1^{-a_1} \cdots P_n^{-a_n})$ and applying the $t$-operation, we obtain $(P_1^{r_1 -a_1} \cdots P_n^{r_n -a_n})_t = (g(X))$. On the other hand, the fact that $(g(X)h(X))$ is a principal ideal guarantees that $(g(X)h(X)) = (g(X)h(X))_t$. Consequently, $t$-division acts like ordinary division in this case, and so if $n_f$ denotes the number of non-associate super-primitive divisors of~$f$, then $n_f < \prod_{i=1}^{n}(r_i + 1) < \infty$.
\end{proof}

We are in a position to prove that the IDF property ascends to polynomial rings in the class of PSP-domains.

\begin{theorem} \label{Theorem B} \label{thm:IDF ascend for PSP-domains}
	Let $D$ be a PSP-domain. If $D$ is an IDF-domain, then $D[X]$ is also an IDF-domain.
\end{theorem}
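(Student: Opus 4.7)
The plan is to fix a nonzero $f \in D[X]$ and bound $\mathsf{D}_{D[X]}(f)$ by splitting it into two pieces according to degree: the non-associate constant irreducible divisors of $f$, and the non-associate non-constant ones. If $f$ is itself a constant, a trivial degree argument shows that only constants can divide $f$ in $D[X]$, so the non-constant part is empty and only the first piece will contribute. The two pieces will be controlled using, respectively, the IDF hypothesis on $D$ and the Upper-to-zero Representation Lemma combined with the PSP hypothesis.

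For the constant part, I would use the standard fact that an element $a \in D$ is irreducible in $D$ if and only if it is irreducible in $D[X]$ (a degree argument together with $D[X]^{\times} = D^{\times}$). If $a \in \mathcal{A}(D)$ divides $f$ in $D[X]$, then $a$ divides every coefficient of $f$, so $a$ lies in $\mathsf{D}_{D}(c)$ for any fixed nonzero coefficient $c$ of $f$. Since $D$ is an IDF-domain, $\mathsf{D}_{D}(c)$ is finite, yielding finitely many non-associate constant irreducible divisors of $f$ in $D[X]$.

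For the non-constant part, assume $f$ is non-constant and let $p \in D[X]$ be a non-constant irreducible divisor of $f$. The key observation is that $p$ must be primitive over $D$: if $A_{p} \subseteq cD$ for some nonunit $c \in D$, then $p = cq$ in $D[X]$ for some $q$, and the irreducibility of $p$ together with $c \notin D^{\times} = D[X]^{\times}$ would force $q \in D[X]^{\times}$, making $p$ a constant, a contradiction. Since $D$ is a PSP-domain, $p$ is therefore super-primitive. Hence every non-constant irreducible divisor of $f$ is a super-primitive divisor of $f$, and Lemma~\ref{Lemma A}(3) guarantees there are only finitely many such divisors up to associates.

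Combining the two finite bounds, $\mathsf{D}_{D[X]}(f)$ is finite, so $D[X]$ is an IDF-domain. The one delicate step is the reduction from ``non-constant irreducible divisor of $f$'' to ``super-primitive divisor of $f$'', which is precisely where the PSP hypothesis is consumed and what makes Lemma~\ref{Lemma A}(3) applicable; everything else is degree bookkeeping plus the IDF hypothesis on $D$. I do not expect any substantive obstacle beyond setting up this reduction cleanly.
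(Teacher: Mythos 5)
Your proposal is correct and follows essentially the same route as the paper's proof: the paper likewise disposes of the constant irreducible divisors via the IDF hypothesis on $D$ (as a divisor-closed subring of $D[X]$), and handles the non-constant ones by observing that an irreducible non-constant divisor is primitive, hence super-primitive by the PSP hypothesis, so that part~(3) of Lemma~\ref{Lemma A} applies. Your writeup of the reduction from ``irreducible non-constant divisor'' to ``super-primitive divisor'' is, if anything, slightly more explicit than the paper's.
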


\begin{proof}
	Verifying that $D[X]$ is an IDF-domain entails checking that each nonzero polynomial $g \in D[X]$ is divisible by at most finitely many irreducible divisors up to associates. Fix a nonzero $g \in D[X]$.
	
	If~$g$ is constant, then the fact that $D$ is simultaneously an IDF-domain and a divisor-closed subring of $D[X]$ guarantees that $g$ has only finitely many irreducible divisors in $D[X]$ up to associates. Therefore we assume that~$g$ is not a constant. Obviously, each irreducible divisor of $g$ that comes from $D$ is a divisor of each of the coefficients of $g$ in $D$ and so, up to associates, $g$ has only finitely many irreducible divisors coming from $D$. Thus, if every irreducible divisor of $g$ is constant, we are done.
	
	Suppose, therefore, that $a$ is an irreducible divisor of $g$ in $D[X]$ such that $a \notin D$. As $K[X]$ is a UFD, we can write $a = a_1 \dots a_\ell$ for irreducibles/primes $a_1, \dots, a_\ell \in K[X]$ assuming, without loss of generality, that $a_1 \notin D$. Then, after writing $g = a_1 b$ for some $b \in K[X]$, we see that $g \in a_1(X) K[X] \cap D[X]$, which is a prime upper to zero of~$D$. Since each proper $t$-ideal is contained in a maximal $t$-ideal, we can assume that $g$ is contained in a maximal $t$-ideal. 
	Hence we can assume that for $n \in \nn$ the polynomial~$g$ belongs to precisely $n$ prime upper to zero ideals that are maximal $t$-ideals, namely, $P_1, \dots, P_n$. As we have seen in part~(1) of Lemma~\ref{Lemma A}, the equality $g(X)D[X] = (AP_{1}^{r_{1}} \cdots P_{n}^{r_n})_t$ holds, where $(A,P_{1}^{r_{1}} \cdots P_{n}^{r_{n}})_{t}=D[X]$. Now, let $f$ be an irreducible (primitive) polynomial dividing~$g$ in $D[X]$. Then $(A_f)_v = D$ and, therefore,~$f$ is a super-primitive divisor of~$g$. 
	 Thus, in light of part~(3) of Lemma~\ref{Lemma A}, there can only be a finite number of such irreducible polynomials $f$. Hence we conclude that $D[X]$ is also an IDF-domain.

\end{proof}

Since every pre-Schreier domain is a PSP-domain, as a consequence of Theorem~\ref{thm:IDF ascend for PSP-domains} we obtain the following result, which was first established by the second author in~\cite{mZ17}.

\begin{cor}
	Let $D$ be a pre-Schreier domain. If $D$ is an IDF-domain, then $D[X]$ is also an IDF-domain.
\end{cor}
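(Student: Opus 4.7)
The plan is to derive this corollary as an immediate specialization of Theorem~\ref{thm:IDF ascend for PSP-domains}. The only thing that needs to be invoked is the containment of classes: every pre-Schreier domain is a PSP-domain. This fact is already recorded in the background section (it is \cite[Lemma~2.1]{Z WB}, noted there to be stated for Schreier domains but proved using only the primality/primal axioms that characterize pre-Schreier domains). With that inclusion in hand, nothing further is required.

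Concretely, I would proceed as follows. First, assume $D$ is a pre-Schreier IDF-domain. Then, by the background discussion on PSP-domains (the paragraph following the definition of super-primitive polynomials), $D$ is a PSP-domain. Since $D$ is simultaneously a PSP-domain and an IDF-domain, Theorem~\ref{thm:IDF ascend for PSP-domains} applies and yields that $D[X]$ is an IDF-domain.

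There is no real obstacle here: the main work, namely establishing the ascent of the IDF property in the PSP setting via the Upper-to-zero Representation Lemma (Lemma~\ref{Lemma A}) and the super-primitivity of irreducible (hence primitive) divisors, has already been carried out in the proof of Theorem~\ref{thm:IDF ascend for PSP-domains}. The corollary is therefore a one-line consequence, and the only point worth making explicit in the writeup is the citation that justifies the inclusion of pre-Schreier domains in the class of PSP-domains.
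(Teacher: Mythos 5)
Your proposal is correct and matches the paper exactly: the corollary is deduced by noting that every pre-Schreier domain is a PSP-domain (via \cite[Lemma~2.1]{Z WB}, as recorded in the background section) and then applying Theorem~\ref{thm:IDF ascend for PSP-domains}. Nothing further is needed.
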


\bigskip
As every PSP-domain is an AP-domain, it is natural to wonder whether the IDF property ascends in the class of AP-domains.

\begin{question}
	Does the IDF property ascend in the class consisting of AP-domains?
\end{question}

\medskip
\subsection{Further Applications of the Upper-to-zero Representation Lemma}

We proceed to provide further applications of the Upper-to-zero Representation Lemma. Recall that an integral domain $D$ is a Pr\"ufer $v$-multiplication domain (PVMD) if every nonzero finitely generated ideal of $D$ is $t$-invertible. We also recall the following result.

\begin{prop} \cite[Proposition~4]{Z poly} \label{Proposition C}
	Let $D$ be an integrally closed domain, and let
	\[
		S = \{ f \in D[X] \mid (A_f)_v = D \}.
	\]
	Then $D$ is a PVMD if and only if $P \cap S \neq \emptyset$ for every prime upper to zero ideal~$P$ of $D[X]$.
\end{prop}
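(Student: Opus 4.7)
The plan is first to recognize, via the characterization stated just before Lemma~\ref{Lemma A} (i.e., \cite[Theorem~1.4]{HZ t-inv}), that a prime upper to zero $P$ of $D[X]$ satisfies $P \cap S \ne \emptyset$ if and only if $P$ is $t$-invertible (equivalently, a maximal $t$-ideal). Thus the proposition reduces to showing that $D$ is a PVMD if and only if every prime upper to zero of $D[X]$ is $t$-invertible.

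For the forward implication, I would fix a prime upper to zero $P$ of $D[X]$ and, after clearing denominators, choose $h \in P$ with $PK[X] = hK[X]$. Since $A_h$ is a nonzero finitely generated ideal and $D$ is a PVMD, $A_h$ is $v$-invertible; extracting finite data from $(A_h A_h^{-1})_v = D$ produces $b_1, \dots, b_s \in A_h^{-1}$ with $(A_h (b_1, \dots, b_s))_v = D$. Setting $k(X) := b_1 + b_2 X + \cdots + b_s X^{s-1} \in K[X]$, so that $A_k = (b_1, \dots, b_s)$, I would form $f := hk$. Gauss's containment $A_f \subseteq A_h A_k \subseteq A_h A_h^{-1} \subseteq D$ gives $f \in D[X]$, and clearly $f \in hK[X] \cap D[X] = P$; the content formula $(A_{hk})_v = (A_h A_k)_v$ (which holds in PVMDs) then yields $(A_f)_v = D$, so $f \in P \cap S$.

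For the converse, assume that every prime upper to zero of $D[X]$ is $t$-invertible. It suffices to show that every nonzero finitely generated ideal $I = (a_1, \dots, a_n) \subseteq D$ is $t$-invertible. The strategy is to consider $f(X) := a_1 + a_2 X + \cdots + a_n X^{n-1} \in D[X]$, whose content is $A_f = I$. Lemma~\ref{Lemma A}(1) yields $fD[X] = (A' P_1^{r_1} \cdots P_m^{r_m})_t$, where, under our hypothesis, the $P_i$'s exhaust the prime uppers to zero containing $f$ and each is $t$-invertible, and $A'$ is $t$-comaximal with $P_1^{r_1} \cdots P_m^{r_m}$. Since $fD[X]$ is principal and the $t$-product $(P_1^{r_1} \cdots P_m^{r_m})_t$ is $t$-invertible, ``inverting'' in the $t$-operation yields $A'_t = (fD[X] \cdot (P_1^{r_1} \cdots P_m^{r_m})^{-1})_t$, which is a $t$-product of $t$-invertible ideals and hence is itself $t$-invertible. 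Identifying $A'_t$ with $(I[X])_t$ will then force $I[X]$, and therefore $I$, to be $t$-invertible.

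The main technical obstacle is the identification $A'_t = (I[X])_t$. Both $A'$ and $I[X]$ are $t$-comaximal with every prime upper to zero (for $I[X]$ because $I[X] \cap D = I \ne (0)$, while any prime upper to zero meets $D$ trivially, and for $A'$ by Lemma~\ref{Lemma A}(1)). Extending $fD[X] = (A' P_1^{r_1} \cdots P_m^{r_m})_t$ to $K[X]$ and comparing with the irreducible factorization of $f$ there forces $r_i$ to be the multiplicity of the $i$-th irreducible factor of $f$ in $K[X]$ and shows $A' K[X] = K[X]$. The delicate part is then to compare $A'$ and $I[X]$ at each maximal $t$-ideal of $D[X]$ of the form $Q[X]$ for a maximal $t$-ideal $Q$ of $D$ (these being the only maximal $t$-ideals of $D[X]$ besides prime uppers to zero, by \cite[Proposition~1.1]{HZ t-inv}), which completes the identification and thereby the proof.
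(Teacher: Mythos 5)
The paper itself does not prove this proposition---it is quoted from \cite[Proposition~4]{Z poly}---so the natural point of comparison is the closely related argument the paper does give for Proposition~\ref{Proposition E}. Your opening reduction (via \cite[Theorem~1.4]{HZ t-inv}: $P \cap S \neq \emptyset$ iff $P$ is $t$-invertible iff $P$ is a maximal $t$-ideal) and your forward implication are correct: Dedekind--Mertens gives $A_{hk} \subseteq A_h A_k \subseteq D$, and the content formula $(A_{hk})_v = (A_h A_k)_v$---which holds for every integrally closed domain, not just for PVMDs---yields $(A_f)_v = D$, so $f \in P \cap S$. This is essentially the standard argument.

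The converse, however, has a genuine gap, located exactly at the step you defer as ``the delicate part.'' Notice first that your sketch of the converse never invokes the hypothesis that $D$ is integrally closed; but the implication ``every prime upper to zero is $t$-invertible $\Rightarrow$ PVMD'' is false without that hypothesis, since it is precisely the UMT property and non-integrally-closed UMT-domains (e.g., $\zz[\sqrt{-3}\,]$) are not PVMDs, cf.\ \cite[Proposition~3.2]{HZ t-inv}. So something essential must be missing. Concretely: localizing $fD[X] = (A'P_1^{r_1}\cdots P_m^{r_m})_t$ at a maximal $t$-ideal of the form $Q[X]$ gives $A'D[X]_{Q[X]} = fD[X]_{Q[X]}$, because each $P_i$ extends to the unit ideal there; hence your identification $A'_t = (I[X])_t$ amounts to proving $A_f D[X]_{Q[X]} = f D[X]_{Q[X]}$ for every maximal $t$-ideal $Q$ of $D$ containing $A_f$. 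That equality is exactly the assertion that $A_f$ is $t$-locally principal, which for the finitely generated ideal $A_f$ is the $t$-invertibility you are trying to establish---the argument is circular as it stands. The standard repair, visible in the paper's proof of Proposition~\ref{Proposition E}, (b) $\Rightarrow$ (a), is to reduce to two-generated ideals: take $f = aX+b$, use integral closedness to write the unique prime upper to zero containing $f$ as $P = fK[X]\cap D[X] = fA_f^{-1}[X]$, and transfer the $t$-invertibility of $P$ (your hypothesis) to $A_f^{-1}$, hence to $(A_f)_v$ and finally to $(a,b)$.
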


In light of \cite[Theorem 1.4]{HZ t-inv}, one concludes that $D$ is a PVMD if and only if~$D$ is integrally closed and every prime upper to zero ideal in $D[X]$ is a maximal $t$-ideal \cite[Proposition~3.2]{HZ t-inv}. In fact, Proposition~\ref{Proposition C} and \cite[Proposition~2.6]{HMM} led to the notion of a \emph{UMT-domain}, that is, an integral domain whose prime upper to zero ideals are maximal $t$-ideals.

\begin{lemma} \label{Lemma C1}
	Let $B$ be a $t$-invertible $t$-ideal of $D[X]$ with $B \cap D = (0)$. Then $B=(A P_{1}^{r_{1}}P_{2}^{r_{2}} \cdots P_{n}^{r_{n}})_{t}$, where $P_1, \dots, P_n$ are the $t$-invertible prime upper to zero ideals in $D[X]$ containing the ideal $B$ and $A$ is an ideal of $D[X]$ such that $(A, P_{1}^{r_{1}}P_{2}^{r_{2}} \cdots P_{n}^{r_{n}})_{t}=D$.
\end{lemma}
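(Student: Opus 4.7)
The plan is to adapt the argument in part~(1) of Lemma~\ref{Lemma A} from the principal case to that of an arbitrary $t$-invertible $t$-ideal. Let $P_1, \dots, P_n$ be the finite list of $t$-invertible prime upper to zero ideals containing $B$ (finiteness being a standard consequence of the $t$-invertibility of $B$, which guarantees it is contained in only finitely many maximal $t$-ideals).

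For each $i \in \ldb 1, n \rdb$, I would set
\[
	r_i := \max \{ r \in \nn_0 \mid B \subseteq (P_i^r)_t \}.
\]
Because $P_i$ is a $t$-invertible prime upper to zero, it has height one in $D[X]$ and the localization $D[X]_{P_i}$ is a DVR; the exponent $r_i$ then coincides with the $P_i$-adic valuation of the nonzero extended ideal $B D[X]_{P_i}$ and is therefore finite. With these exponents in hand, I would define
\[
	A := \bigl( B (P_1^{r_1} \cdots P_n^{r_n})^{-1} \bigr)_t
\]
as a fractional $t$-ideal of $D[X]$, and set $J := (P_1^{r_1} \cdots P_n^{r_n})_t$. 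Since $P_1, \dots, P_n$ are pairwise $t$-comaximal (as distinct maximal $t$-ideals of $D[X]$, no maximal $t$-ideal can contain two of them), so are their powers $P_i^{r_i}$, which yields the identity $J = \bigcap_{i=1}^n (P_i^{r_i})_t$. In particular $B \subseteq J$, and the $t$-invertibility of $J$ then gives $A \subseteq (J J^{-1})_t = D[X]$, so $A$ is integral; moreover, $(A J)_t = (B J^{-1} J)_t = B$, which is exactly $B = (A P_1^{r_1} \cdots P_n^{r_n})_t$.

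It remains to verify $t$-comaximality of $A$ with the product $P_1^{r_1} \cdots P_n^{r_n}$. The maximality of $r_i$ forces $A \not\subseteq P_i$ for every $i$: indeed, $A \subseteq P_i$ would give $B = (A J)_t \subseteq (P_i^{r_i+1})_t$, contradicting the choice of $r_i$. Since $P_i$ is a maximal $t$-ideal, $(A, P_i)_t = D[X]$ for every $i$, and pairwise $t$-comaximality of the $P_i^{r_i}$ promotes this to $(A, P_1^{r_1} \cdots P_n^{r_n})_t = D[X]$, as required.

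The main obstacle is securing the finiteness of each $r_i$ and the integrality of $A$; both rest on the interaction between the $t$-operation, localization at the $t$-invertible maximal $t$-ideal $P_i$ (which is a DVR since $P_i$ has height one), and pairwise $t$-comaximality of the family $\{P_i\}$. Once these pieces are in place, the remainder of the argument is a formal manipulation inside the group of $t$-invertible fractional $t$-ideals of $D[X]$.
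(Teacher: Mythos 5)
Your proof is correct and follows essentially the same route as the paper's: both extract the exponents $r_i$ from the rank-one DVR localizations $D[X]_{P_i}$, divide out the $t$-product of the $P_i^{r_i}$ using $t$-invertibility, and deduce $t$-comaximality from the fact that $A$ lies in no $P_i$. The only difference is cosmetic --- you define $A$ in one step as $\bigl(B(P_1^{r_1}\cdots P_n^{r_n})^{-1}\bigr)_t$ and check integrality via $B \subseteq \bigcap_i (P_i^{r_i})_t$, whereas the paper peels off one prime at a time, and you make explicit the step (left implicit in the paper) that maximality of $r_i$ forces $A \not\subseteq P_i$.
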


\begin{proof}
	Note that the localization $D[X]_{P_1}$ is a rank-one DVR and, as a result, there exists $r_1 \in \nn$ such that $B \subseteq (P_1^{r_1})_t$ but $B \nsubseteq (P_1^{r_1+1})_t$. Since $(P_1^{r_1})_t$ is $t$-invertible, we can write $B = (B_1 P_1^{r_1})_t$ for some ideal $B_1$. Since $D[X]_{P_j}$ is a rank-one DVR for every $j \in \ldb 1,n \rdb$, proceeding in a similar manner, we obtain that
	\[
		B = (B_1 P_1^{r_1})_t = (B_2P_1^{r_1}P_2^{r_2})_t = \dots = (B_{n}P_{1}^{r_{1}}P_{2}^{r_{2}} \cdots P_{n}^{r_{n}})_{t}.
	\]
	Now set $A := B_n$, 
	and observe that $A \subseteq D[X]$. 
	Finally, it follows from the fact that the ideals $A$ and $(P_1^{r_1} P_2^{r_2} \cdots P_n^{r_n})_t$ share no maximal $t$-ideals that $(A,P_1^{r_1} P_2^{r_2} \cdots P_n^{r_n})_t = D[X]$.
\end{proof}

We can now characterize PVMDs using prime upper to zero ideals.

\begin{prop} \label{Prop D}
	An integral domain $D$ is a PVMD if and only if for each non-constant polynomial $f \in D[X]$, there are prime upper to zero ideals $P_1, \dots, P_n$ such that $f(X)D[X]=(AP_{1}^{r_{1}} \cdots P_{n}^{r_{n}})_{t}$ for some $r_1, \dots, r_n \in \nn$, where $A=A_{f}[X]$.
\end{prop}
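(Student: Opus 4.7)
The plan is to handle both implications, leveraging the ascent of the PVMD property to polynomial rings and the classification of the maximal $t$-ideals of $D[X]$. For the direction $(\Rightarrow)$, I would combine three ingredients: (i) the ascent of the PVMD property from $D$ to $D[X]$; (ii) the fact that every PVMD is a UMT-domain, so each prime upper to zero of $D[X]$ is a $t$-invertible maximal $t$-ideal; and (iii) the content formula $(A_{fg})_t=(A_fA_g)_t$ valid in every PVMD. Since $f(X)D[X]$ is principal and $D[X]$ is a PVMD, this ideal is $t$-invertible, hence factors as a $t$-product of maximal $t$-ideals of $D[X]$. These maximal $t$-ideals split into prime uppers to zero $P_1,\dots,P_n$ and extensions $Q[X]$ of maximal $t$-ideals $Q$ of $D$; the content formula then lets me collect the $Q[X]$-contributions into $A_f[X]$, yielding the stated identity $f(X)D[X]=(A_f[X]P_1^{r_1}\cdots P_n^{r_n})_t$.

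For the direction $(\Leftarrow)$, I would appeal to the characterization recorded in the discussion following Proposition~\ref{Proposition C}: $D$ is a PVMD if and only if $D$ is integrally closed and every prime upper to zero of $D[X]$ is a maximal $t$-ideal. To verify the UMT condition, let $P$ be a prime upper to zero and pick a non-constant $f\in P$ (by clearing denominators from a generator of $PK[X]$). Invoking the hypothesis $f(X)D[X]=(A_f[X]P_1^{r_1}\cdots P_n^{r_n})_t$, the primality of $P$ together with $P\cap D=(0)$ (which rules out $A_f[X]\subseteq P$) would force $P=P_i$ for some $i$; the $t$-invertibility of the principal ideal $f(X)D[X]$ should then pin down the $t$-invertibility of $P$, perhaps after localizing at $P$ and exploiting the quasi-local structure of $D[X]_P$.

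The main obstacle I anticipate is deriving that $D$ is integrally closed from the decomposition hypothesis alone. My plan here is to feed monic polynomials into the decomposition: given $\alpha\in K$ integral over $D$ with monic annihilator $m(X)\in D[X]$, the identity $A_m=D$ reduces $m(X)D[X]$ to $(P_1^{r_1}\cdots P_n^{r_n})_t$, and the prime upper to zero $(X-\alpha)K[X]\cap D[X]$ must appear among the $P_i$. Chasing this relation should yield $\alpha\in D$. An alternative route, if integral closure proves elusive, is to bypass it entirely by showing directly that every finitely generated ideal $I=(a_1,\dots,a_m)$ of $D$ is $t$-invertible, applying the decomposition to $f=a_1+a_2X+\cdots+a_mX^{m-1}$ (so that $A_f=I$) and transferring the $t$-invertibility of $f(X)D[X]$ down to $A_f$.
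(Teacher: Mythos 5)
Your treatment of the converse is sound, but only via the fallback you mention at the very end: apply the hypothesis to $f = a_1 + a_2X + \cdots + a_mX^{m-1}$, so that $A_f = I$, and transfer the $t$-invertibility of the principal ideal $f(X)D[X]$ to its $t$-factor $A_f[X]$, hence to $A_f$. This is exactly the paper's proof of that direction, and it is the right move. Your ``primary'' plan of first establishing that $D$ is integrally closed and a UMT-domain should be dropped: as you yourself suspect, integral closedness does not fall out of the decomposition hypothesis in any obvious way, and it is not needed once one sees that $t$-invertibility of every finitely generated content ideal is already the full PVMD condition. Promote the fallback to the main argument.

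The genuine gap is in the forward direction. From ``$f(X)D[X]$ is $t$-invertible'' you infer that it ``factors as a $t$-product of maximal $t$-ideals of $D[X]$.'' That implication is false: $t$-invertibility does not yield a $t$-factorization into maximal $t$-ideals --- that is precisely the strictly stronger notion of a $t$-invertibility element recalled before Lemma~\ref{Lemma A} (cf.\ \cite[Theorem~1.3]{GMZ}). It already fails in a rank-one non-discrete valuation domain $V$: for a nonunit $a$, the principal ideal $aV$ is $t$-invertible, but the only maximal $t$-ideal containing it is the idempotent maximal ideal $M$, which is not $t$-invertible, so $aV$ admits no such $t$-product. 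The same obstruction reappears in $V[X]$ (a PVMD's polynomial ring) for the portion of $f(X)D[X]$ lying over the constants, i.e.\ exactly the piece you want to collect into $A_f[X]$. Fortunately the statement does not ask $A_f[X]$ to be a product of maximal $t$-ideals, so the detour is avoidable: factor $f = c\,h_1^{r_1}\cdots h_n^{r_n}$ in $K[X]$, use integral closedness of the PVMD to write each $P_i = h_iK[X]\cap D[X]$ as $h_iA_{h_i}^{-1}[X]$, and combine this with the content formula $(A_{fg})_t = (A_fA_g)_t$ to obtain $f(X)D[X] = (A_f[X]P_1^{r_1}\cdots P_n^{r_n})_t$ directly. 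The paper dispatches this direction by citing \cite[Lemma~2.4]{gwC15}, which packages exactly that computation.
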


\begin{proof}
	The direct implication follows from~\cite[Lemma~2.4]{gwC15}.
	\smallskip

	Conversely, suppose that for each non-constant polynomial $f(X) \in D[X]$ there are prime upper to zero ideals $P_1, \dots, P_n$ such that $f(X)D[X] = (A_{f} P_{1}^{r_{1}}P_{2}^{r_{2}} \cdots P_{n}^{r_{n}})_t$ for some $r_1, \dots, r_n \in \nn$. Then, by construction, $A_f$ is $t$-invertible. Since for every finitely generated nonzero ideal $A=(a_{0},a_{1}, \dots, a_{m})$ we can construct a non-constant polynomial $f = \sum_{i=0}^{m}a_{i}X^{i}$ such that $A_f = A$, we conclude that every finitely generated nonzero ideal of $D$ is $t$-invertible; that is, $D$ is a PVMD.
\end{proof}

We can also use prime upper to zero ideals to characterize PVMDs inside the class consisting of integrally closed domains.

\begin{prop} \label{Proposition E} \label{Proposition F}
	For an integrally closed domain $D$, the following conditions are equivalent.
	\begin{enumerate}
		\item[(a)] $D$ is a PVMD.
		\smallskip
		
		\item[(b)] Every linear non-constant polynomial over $D$ is contained in a $t$-invertible prime upper to zero ideal.
		\smallskip
		
		\item[(c)] Every ideal $A$ of $D[X]$ with $A\cap D=(0)$ is contained in a $t$-invertible prime upper to zero ideal.
	\end{enumerate}
\end{prop}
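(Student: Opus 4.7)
My plan is to establish the cycle of implications $(a) \Rightarrow (c) \Rightarrow (b) \Rightarrow (a)$. The backbone of the argument is the characterization recalled just after Proposition~\ref{Proposition C}: for an integrally closed domain, being a PVMD is equivalent to every prime upper to zero ideal of $D[X]$ being a (hence $t$-invertible) maximal $t$-ideal. With this in hand, the equivalence $(a) \Leftrightarrow (c)$ is essentially formal, while the genuine content sits in showing that the weaker hypothesis $(b)$, which involves only linear polynomials, already forces the PVMD property.

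For $(a) \Rightarrow (c)$, let $A$ be an ideal of $D[X]$ with $A \cap D = (0)$. I would first observe that $A K[X] \neq K[X]$: any expression $1 = \sum a_i k_i$ with $a_i \in A$ and $k_i \in K[X]$ can be cleared of denominators to produce a nonzero element of $A \cap D$, a contradiction. Hence $A K[X]$ sits inside a maximal ideal $h K[X]$ of $K[X]$ for some irreducible $h \in K[X]$, and so $A$ is contained in the prime upper to zero $P := h K[X] \cap D[X]$, which is $t$-invertible by the backbone fact. The implication $(c) \Rightarrow (b)$ is then immediate upon taking $A := f D[X]$ for a linear non-constant $f \in D[X]$: a degree count shows $A \cap D = (0)$, and $(c)$ supplies the desired $t$-invertible prime upper to zero containing $f$.

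The heart of the proof is $(b) \Rightarrow (a)$. Given $a, b \in D \setminus \{0\}$, I would form $f := aX + b$, which is irreducible in $K[X]$ since it is linear with unit leading coefficient. By $(b)$ there is a $t$-invertible prime upper to zero $P$ containing $f$, and irreducibility of $f$ forces $P = f K[X] \cap D[X]$. Since $P$ is $t$-invertible, \cite[Theorem~1.4]{HZ t-inv} yields a super-primitive polynomial $g \in P$; write $g = f q$ with $q \in K[X]$. The content formula $(A_{fq})_v = (A_f A_q)_v$, valid in integrally closed domains (Pr\"ufer's theorem, extended to $q \in K[X]$ by clearing denominators), then gives $(A_f A_q)_v = (A_g)_v = D$. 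Thus the finitely generated ideal $A_f = (a, b)$ is $v$-invertible, and because $v$- and $t$-invertibility coincide on finitely generated ideals, $(a, b)$ is $t$-invertible in $D$. To upgrade this two-generated $t$-invertibility to the full PVMD condition I would invoke the standard characterization that $D$ is a PVMD if and only if $D_M$ is a valuation domain for every maximal $t$-ideal $M$: for any such $M$, the $t$-invertibility of $(a, b)$ forces $(a, b) D_M$ to be invertible, hence principal, in the local domain $D_M$, and a routine local argument then promotes this to the valuation property of $D_M$.

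The hardest step, I expect, is precisely the middle of $(b) \Rightarrow (a)$: transferring the $t$-invertibility of the prime upper to zero $P$ in $D[X]$ to the $t$-invertibility of the content ideal $A_f$ in $D$. This is where the integral closure of $D$ enters essentially through the content formula, and where one must carefully distinguish $v$- from $t$-invertibility, using finite generation of $A_f$ to close the gap.
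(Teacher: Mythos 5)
Your route is sound in outline and genuinely different from the paper's at two points, but one step rests on a false general claim and must be repaired before the argument closes.

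The problem is in $(b)\Rightarrow(a)$, at the sentence ``because $v$- and $t$-invertibility coincide on finitely generated ideals.'' That is not true: a domain in which every nonzero finitely generated ideal is $v$-invertible is a $v$-domain, and the class of $v$-domains is known to strictly contain the class of PVMDs, so there exist finitely generated $v$-invertible ideals that are not $t$-invertible. This distinction is exactly the one at stake in the proposition: if your argument only delivered $v$-invertibility of every $(a,b)$, you would have shown that $D$ is a $v$-domain, which is weaker than~(a). The repair, however, is immediate from what you have already set up. From $(A_fA_q)_v=(A_g)_v=D$ you get $A_fA_q\subseteq (A_fA_q)_v=D$, so $A_q\subseteq A_f^{-1}$; moreover $A_fA_q$ is finitely generated (both factors are), and the $t$- and $v$-operations agree on \emph{finitely generated} ideals, so $(A_fA_q)_t=(A_fA_q)_v=D$. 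Hence $D=(A_fA_q)_t\subseteq (A_fA_f^{-1})_t\subseteq D$, and $A_f$ is $t$-invertible. The point is that $t$-invertibility is witnessed by the finitely generated subideal $A_fA_q$ of $A_fA_f^{-1}$, not by any general coincidence of the two notions. With this fix the rest of $(b)\Rightarrow(a)$ is fine; the closing step, that $t$-invertibility of all two-generated nonzero ideals yields the PVMD property, is standard and is invoked without proof by the paper as well.

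On the comparison: the paper proves $(a)\Leftrightarrow(b)$ and $(a)\Leftrightarrow(c)$ separately rather than a cycle. Its $(b)\Rightarrow(a)$ writes $f(X)D[X]=(AP)_t$, deduces $P=f(X)A^{-1}$ and, using integral closedness, $P=f(X)A_f^{-1}[X]$, whence $A_t=(A_f)_t[X]$ and $f(X)D[X]=(A_fP)_t$, so that $A_f$ is $t$-invertible as a $t$-factor of a principal ideal. Your route instead extracts a super-primitive polynomial from the $t$-invertible upper to zero and applies the Gauss--Pr\"ufer content formula $(A_{fq})_v=(A_fA_q)_v$; this makes the role of integral closedness more transparent, at the cost of the $v$-versus-$t$ bookkeeping above. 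For $(a)\Rightarrow(c)$ your argument (pass to the proper ideal $AK[X]$ of the PID $K[X]$ and pull back an irreducible factor) is simpler than the paper's, which invokes a factorization $sA=fC$ from \cite{AKZ}; it is in fact the same device the paper uses in proving Proposition~\ref{prop:UMT characterization}. Your $(c)\Rightarrow(b)$ via $A=fD[X]$ is correct and lets you skip the paper's separate $(c)\Rightarrow(a)$ argument.
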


\begin{proof}
	(a) $\Rightarrow$ (b): If $D$ is a PVMD, then as every prime upper to zero ideal is a maximal $t$-ideal and hence $t$-invertible, every linear polynomial is contained in a $t$-invertible prime upper to zero ideal.
	\smallskip
	
	(b) $\Rightarrow$ (a): Proving this amounts to arguing that each two-generated nonzero ideal of $D$ is $t$-invertible. To do so, take $a,b \in D^*$ such that $(a,b)$ is a nonzero ideal of $D$. Then $f(X) = aX + b$ is a non-constant linear polynomial over $D$, and so $f(X)$ is contained in a $t$-invertible prime upper to zero ideal $P$. Then $f(X)D[X]\subseteq P$, and so $f(X)D[X]=(AP)_t$ for some ideal $A$. As a result,
	\begin{equation} \label{eq:P}
		P=((A^{-1})f(X)D[X])_t = f(X)A^{-1}.
	\end{equation}
	Since $D$ is integrally closed, $P=f(X)K[X] \cap D[X] = f(X)A_f^{-1}[X]$. This, together with~\eqref{eq:P}, guarantees that $ f(X)A^{-1} =  f(X)A_f^{-1}[X]$, from which the equality $A^{-1} = A_f^{-1}[X]$ follows. This, in turns, implies that $A_v = (A_f)_v[X]$. Because $A_v$ and $(A_f)_v$ are of finite type, $A_t = (A_f)_t[X]$. Therefore
	\[
		f(X)D[X] = (AP)_t = (A_t P)_t = ((A_f)_t P)_t = (A_f P)_t,
	\]
	forcing the ideal $(a,b) = A_f$ to be $t$-invertible. Hence we conclude that $D$ is a PVMD.
	\smallskip
	
	(a) $\Rightarrow$ (c): Let $A$ be an ideal of $D[X]$ with $A \cap D=(0)$. 
	Because $D$ is integrally closed, \cite[Theorem 2.1]{AKZ} guarantees the existence of $s \in D^*$, a polynomial $f \in D[X]$, and an ideal $C$ of $D[X]$ with  $C \cap D \neq (0)$ such that $sA = fC$. Since $f(X)D[X]$ is contained in at least one prime upper to zero ideal, $sA$ is also contained in a prime upper to zero ideal. As $s$ is a constant, it does not belong to any prime upper to zero ideal and, therefore, $A$ is contained in at least one prime upper to zero ideal. 
	\smallskip
	
	(c) $\Rightarrow$ (a): Let $f$ be a non-constant linear polynomial. As $D$ is integrally closed, $f(X)A_{f}^{-1}[X] = P$. Since $P$ is $t$-invertible, the ideal $A_{f}^{-1}[X]$ is $t$-invertible and, therefore, so is $A_{f}^{-1}$. Hence the ideal $(A_f)_v$ is also $t$-invertible. Thus, every two-generated nonzero ideal of $D$ is $t$-invertible, which means that $D$ is a PVMD.
\end{proof}

By \cite[Proposition 3.2]{HZ t-inv}, an integral domain $D$ is a PVMD if and only if $D$ is an integrally closed UMT-domain. It turns out that we can also characterize UMT-domains using prime upper to zero ideals.

\begin{prop} \label{Proposition G} \label{prop:UMT characterization}
	An integral domain $D$ is a UMT-domain if and only if each $t$-invertible $t$-ideal $A$ of $D[X]$ with $A\cap D=(0)$ is contained in a $t$-invertible prime upper to zero ideal.
\end{prop}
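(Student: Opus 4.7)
The plan is to pivot on the fact, recalled just after the Upper-to-zero Representation Lemma, that a prime upper to zero ideal in $D[X]$ is a maximal $t$-ideal if and only if it is $t$-invertible (by \cite[Theorem~1.4]{HZ t-inv}). Under this equivalence, the forward implication reduces to showing that any $t$-invertible $t$-ideal $A$ of $D[X]$ with $A \cap D = (0)$ lies inside at least one prime upper to zero (the UMT hypothesis will promote it to $t$-invertibility automatically), while the converse reduces to showing that every prime upper to zero $Q$ of $D[X]$ is itself $t$-invertible.

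For the forward direction, assume $D$ is a UMT-domain and let $A$ be a $t$-invertible $t$-ideal of $D[X]$ with $A \cap D = (0)$. The critical observation is that the extended ideal $AK[X]$ is a proper ideal of $K[X]$: indeed, $AK[X] = K[X]$ would force $1 = \sum_i a_i k_i$ for some $a_i \in A$ and $k_i \in K[X]$, and clearing a common denominator $d \in D^*$ would yield $d \in A \cap D$, contradicting $A \cap D = (0)$. Since $K[X]$ is a principal ideal domain, $AK[X] \subseteq hK[X]$ for some irreducible $h \in K[X]$, so $A \subseteq hK[X] \cap D[X]$, which is a prime upper to zero of $D[X]$. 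The UMT hypothesis makes this prime upper to zero a maximal $t$-ideal, and \cite[Theorem~1.4]{HZ t-inv} then upgrades it to $t$-invertible. (Notice that this argument does not actually need $t$-invertibility of $A$.)

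For the converse, fix a prime upper to zero ideal $Q$ of $D[X]$. The trick is to feed the hypothesis a carefully chosen principal subideal of $Q$. Choose $f \in Q$ nonzero of minimal degree in $X$; writing $Q = hK[X] \cap D[X]$ with $h$ irreducible in $K[X]$, clearing denominators in $h$ produces an element of $Q$ of degree $\deg h$, so minimality forces $\deg f = \deg h$. Consequently, $f$ is associate to $h$ in $K[X]$, $f$ is irreducible in $K[X]$, and $Q = fK[X] \cap D[X]$. The principal ideal $fD[X]$ is automatically a $t$-invertible $t$-ideal, and $fD[X] \cap D = (0)$ because $\deg f \ge 1$. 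The hypothesis then places $fD[X]$ inside some $t$-invertible prime upper to zero $P = gK[X] \cap D[X]$ with $g$ irreducible in $K[X]$. Since $f \in P$, we have $g \mid f$ in $K[X]$, and the irreducibility of $f$ in $K[X]$ forces $g$ to be associate to $f$, whence $P = Q$. Thus $Q$ itself is $t$-invertible, hence a maximal $t$-ideal, and $D$ is a UMT-domain. The only delicate point is the minimal-degree selection of $f$ and the resulting identity $Q = fK[X] \cap D[X]$; this is the single bridge that transfers the abstract $t$-invertibility hypothesis on principal subideals of $Q$ into $t$-invertibility of $Q$ itself.
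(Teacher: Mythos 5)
Your proof is correct and follows essentially the same route as the paper's: extend $A$ to $K[X]$, contract an irreducible factor to get a prime upper to zero, and for the converse feed the principal ideal generated by a denominator-cleared generator of $Q$ into the hypothesis. The only cosmetic differences are your minimal-degree selection of $f$ (the paper simply clears denominators in $h$) and concluding $P=Q$ via associate irreducibles rather than via the inclusion of the two uppers to zero.
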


\begin{proof}
	Assume first that $D$ is a UMT-domain. Since $A \cap D=(0)$, the equality $AK[X] = h(X)K[X]$ holds for some nonzero nonunit $h \in K[X]$. As $h$ is a product of primes in $K[X]$, there is an irreducible element $p \in K[X]$ such that $AK[X]=h(X)K[X] \subseteq p(X)K[X]$. As a result,
	\[
		A \subseteq AK[X] \cap D[X] \subseteq p(X)K[X] \cap D[X] \subseteq D[X].
	\]
	Observe that $p(X)K[X] \cap D[X] \subseteq D[X]$ is a prime upper to zero ideal, and so the fact that $D$ is a UMT-domain ensures that $p(X)K[X] \cap D[X]$ is a maximal $t$-ideal. Hence $A$ is contained in a $t$-invertible prime upper to zero ideal.

	Conversely, assume that each $t$-invertible $t$-ideal $A$ of $D[X]$ with $A\cap D=(0)$ is contained in a $t$-invertible prime upper to zero ideal. 
	Now let $P$ be a prime upper to zero ideal. Then we can write $P = h(X)K[X] \cap D[X]$ for some non-constant polynomial $h \in D[X]$. Note that $h(X)D[X]$ is a $t$-invertible $t$-ideal, and so it must be contained in a $t$-invertible prime upper to zero ideal, namely, $Q$. As a consequence, $P = h(X)K[X] \cap D[X] \subseteq QK[X] \cap D[X] = Q,$ forcing the equality $P=Q$, which means that $P$ is a maximal $t$-ideal.
\end{proof}

\bigskip
\section{Furstenberg IDF-Domains}
\label{sec:TIDF-domains}

In this section, we begin our study of TIDF-domains. Recall that an integral domain $D$ is a TIDF-domain if $D$ is a Furstenberg IDF-domain; that is, every nonunit of $D$ has a \emph{nonempty} finite set of irreducible divisors up to associates. There are, however, Furstenberg domains that are not TIDF-domains.

\begin{example}
	Consider the integral domain $D = \zz + X\qq[X]$. Observe that $\pp \subseteq \mathcal{A}(D)$. Let $f$ be a nonzero nonunit in $D$. Assume first that $\text{ord} \, f = 0$. If there is a $p \in \pp$ such that $p \mid_\zz f(0)$, then $p \mid_D f$. Otherwise, $f(0) \in \{\pm 1\}$ and so every minimum-degree non-constant polynomial dividing $f$ in~$D$ is irreducible. On the other hand, if $\text{ord} \, f \neq 0$, then $2$ (or any other prime) must divide $f$ in $D$. Hence~$D$ is a Furstenberg domain. Finally, distinct primes in $\pp$ are non-associates in $D$ and so the fact that $p \mid_D X$ for all $p \in \pp$ ensures that $D$ is not an IDF-domain, and so $D$ is not a TIDF-domain.
\end{example}

Although it follows from the corresponding definitions that every TIDF-domain is, in particular, an IDF-domain, the converse does not hold in general. Indeed, fields and antimatter domains are IDF-domains but not TIDF-domains. A less trivial example of an IDF-domain that is not a TIDF-domain is given below in Example~\ref{ex:IDF not TIDF}. First, let us describe a construction that allows us to produce TIDF-domains.

\begin{example} \label{ex:TIDF-domains}
	Let $V$ be a discrete rank-$2$ valuation domain with maximal ideal $M$, and let $S$ be the multiplicative set generated by a prime $p$ that belongs to $M$. Now consider the integral domain $D := V +XV_S[X]$. Since $D$ is a Schreier domain (see \cite{CMZ78} and \cite[Example~20]{mZ78}), every irreducible in~$D$ is a prime. Let $f$ be a nonzero nonunit in $D$, and write $f(X) = v+Xg(X)$ for some $v \in V$ and $g \in V_S[X]$. It is clear that $f$ is divisible by $p$ in $D$ when $v$ is a nonunit in $V$ and, hence, in~$D$. On the other hand, when $v$ is a unit in $V$, we can use a simple degree consideration to take a maximum $m \in \nn$ such that $f = a_1 \cdots a_m$ for nonunits $a_1, \dots, a_m$ of $D$, in which case, $a_1$ is an irreducible divisor of~$f$ in~$D$. Thus,~$D$ is a Furstenberg domain. To argue that $D$ is an IDF-domain, suppose that $a$ is an irreducible divisor of $f$ in $D$. If $a$ is a constant polynomial, then $a$ and $p$ are associates in $D$. On the other hand, if $\deg a \ge 1$, then $a$ is associate in $D$ with an element of the form $1 + X g_a(X)$ for some $g_a \in V_S[X]$. Since $D$ is an AP-domain, a degree consideration reveals that only finitely many irreducibles of the form $1 + Xg_a(X)$ can divide $f$ in $D$, whence $f$ has only finitely many divisors in $D$ up to associates. Hence~$D$ is an IDF-domain and so a TIDF-domain.
\end{example}

\begin{example} \label{ex:IDF not TIDF}
	Now suppose that $V$ is a rank-$2$ valuation domain such that the maximal ideal $M$ of~$V$ is idempotent. Take a nonzero $s \in V$ such that $V[s^{-1}]$ is properly contained in the field of fractions of~$V$. Now consider the multiplicative subset $S := \{s^n \mid n \in \nn_0\}$ of $V$, and set $D := V + XV_S[X]$. One can show that $D$ is an IDF-domain by following the argument given in Example~\ref{ex:TIDF-domains}. In addition, as in Example~\ref{ex:TIDF-domains}, the integral domain $D$ is a Schreier domain and, therefore, an AP-domain (see \cite[Example~20]{mZ78}). Because $M$ is idempotent, $V$ cannot contain any primes, which in this case means that $V$ is antimatter. Since $V$ is a divisor-closed subring of $D$, the nonzero elements in~$M$, which are nonunits of $D$, cannot be divisible by any irreducible in $D$. Hence $D$ is not a Furstenberg domain, and so we conclude that $D$ is not a TIDF-domain.
\end{example}

Constructions similar to those given in the previous examples will be considered in Section~\ref{sec:D+M}. Examples~\ref{ex:TIDF-domains} and~\ref{ex:IDF not TIDF}, along with further examples of TIDF-domains, were recently given by the second author in~\cite{mZ22}.

\medskip
\subsection{Atomic Domains}

Since FFDs can be characterized as atomic IDF-domains \cite[Theorem~5.1]{AAZ90}, FFDs are examples of atomic TIDF-domains. However, there are TIDF-domains that are not atomic. The following example sheds some light upon this observation.

\begin{example} \label{ex:nontrivial non-atomic TIDF-domain}
	Fix $p \in \pp$, and then set $D = \zz_{(p)} + X \cc \ldb X \rdb$. Since $\cc\ldb X \rdb$ is local, ~\cite[Lemma~4.17]{AG22} guarantees that $D^\times = \zz_{(p)}^\times + X \cc\ldb  X \rdb$. It is clear that $p \in \mathcal{A}(D)$. Also, if $f \in D^*$ with $\text{ord} \, f \ge 1$, then $f \notin \mathcal{A}(D)$; indeed, $f = p(f/p)$ and both $p$ and $f/p$ belong to $D^* \setminus D^\times$. Therefore, for any $q \in \zz_{(p)}$ and $g \in \cc\ldb X \rdb$, we see that $q + Xg(X)$ belongs to $\mathcal{A}(D)$ if and only if the $p$-adic valuation of $q$ is~$1$. Hence $\mathcal{A}(D) = pD^\times$. This, together with the fact that $p$ divides each nonunit in $D$, implies that $D$ is a TIDF-domain. However, as $\zz_{(p)}$ is not a field, \cite[Proposition~1.2]{AAZ90} ensures that~$D$ is not atomic.
\end{example}

On the other hand, it is natural to wonder under which extra conditions a TIDF-domain is guaranteed to be an FFD. The next proposition gives an answer to this question. Recall that an integral domain~$D$ is Archimedean if $\bigcap_{n \in \nn} x^nD = (0)$ for every nonunit $x \in D$.

\begin{prop} \label{prop:xe}
	The following conditions are equivalent for an integral domain $D$.
	\begin{enumerate}
		\item[(a)] $D$ is an FFD.
		\smallskip
		
		\item[(b)] $D$ is an Archimedean TIDF-domain.
		\smallskip
		
		\item[(c)] $D$ is a TIDF-domain and $\bigcap_{n \in \nn} a^n D = (0)$ for each $a \in \mathcal{A}(D)$.
	\end{enumerate}
\end{prop}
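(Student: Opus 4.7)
The plan is to establish the cycle (a) $\Rightarrow$ (b) $\Rightarrow$ (c) $\Rightarrow$ (a). The implications (a) $\Rightarrow$ (b) and (b) $\Rightarrow$ (c) are routine. For (a) $\Rightarrow$ (b), the TIDF part is already noted in the paper following \cite[Theorem~5.1]{AAZ90}, so only the Archimedean condition needs verification. Every FFD is a BFD, and every BFD is Archimedean: if $x \in D \setminus D^\times$ and $y \in \bigcap_{n \in \nn} x^nD$ with $y \ne 0$, then since BFD $\Rightarrow$ atomic, both $x$ and $y$ admit atomic factorizations, and the relation $x^n \mid_D y$ forces any length in $\mathsf{L}(y)$ to be at least $n \cdot \min \mathsf{L}(x) \ge n$ for every $n \in \nn$, contradicting $|\mathsf{L}(y)| < \infty$. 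The implication (b) $\Rightarrow$ (c) is immediate from the definitions, since atoms are nonunits.

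The substantive implication is (c) $\Rightarrow$ (a). My strategy is to show that $D$ is atomic; combined with the IDF property, \cite[Theorem~5.1]{AAZ90} will then give that $D$ is an FFD. To prove atomicity, fix a nonzero nonunit $x \in D$ and, using the Furstenberg property, iteratively extract irreducible factors: pick $a_1 \in \mathcal{A}(D)$ with $a_1 \mid_D x$ and write $x = a_1 x_1$; if $x_1 \in D^\times$, stop; otherwise pick $a_2 \in \mathcal{A}(D)$ with $a_2 \mid_D x_1$, and so on. If this process fails to terminate, we obtain sequences $(a_n)_{n \in \nn} \subseteq \mathcal{A}(D)$ and $(x_n)_{n \in \nn} \subseteq D^* \setminus D^\times$ with $x = a_1 \cdots a_n x_n$ for every $n \in \nn$. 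Since each $a_n$ divides $x$, we have $\{a_n D^\times \mid n \in \nn\} \subseteq \mathsf{D}(x)$, and the IDF hypothesis makes the right-hand side finite. The pigeonhole principle then yields an atom $a \in \mathcal{A}(D)$ and an infinite subset $I \subseteq \nn$ such that $a_n D^\times = a D^\times$ for all $n \in I$. For each $m \in \nn$, selecting $m$ indices from $I$ and writing the corresponding $a_n$'s as unit multiples of $a$ gives $a^m \mid_D x$, whence $x \in \bigcap_{m \in \nn} a^m D = (0)$ by hypothesis (c), contradicting $x \ne 0$. Therefore the extraction process terminates, and $D$ is atomic.

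The main obstacle is the pigeonhole step, where one must be careful that the $a_n$'s are defined only up to associates, so that the divisibility $a^m \mid_D x$ is extracted not by reusing the same element but by collecting $m$ distinct associates of the fixed atom $a$ within the truncation $a_1 \cdots a_N$ (for $N$ larger than the $m$-th element of $I$) and absorbing the resulting unit. Once atomicity is secured, the remainder is a direct appeal to the atomic-IDF characterization of FFDs \cite[Theorem~5.1]{AAZ90}.
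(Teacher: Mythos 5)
Your proof is correct and takes essentially the same approach as the paper: the implication (c) $\Rightarrow$ (a) is reduced to atomicity by greedily extracting irreducible divisors, with the Furstenberg property supplying the atoms, the IDF property limiting how many non-associate atoms can occur, and the condition $\bigcap_{n \in \nn} a^n D = (0)$ ruling out an infinite extraction, after which \cite[Theorem~5.1]{AAZ90} finishes the job. The only differences are cosmetic: the paper peels off the \emph{maximal} power of each atom in turn (so the intersection condition furnishes a maximal exponent and IDF alone forces termination, successive atoms being automatically non-associate), whereas you peel off one atom at a time and add a pigeonhole step; and for (a) $\Rightarrow$ (b) the paper quotes the fact that ACCP implies Archimedean, while you argue directly via the BFD property (where your phrase ``any length in $\mathsf{L}(y)$'' should read ``some length,'' though the unboundedness of $\mathsf{L}(y)$ is all you need and is what your argument delivers).
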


\begin{proof}
	(a) $\Rightarrow $ (b): Since $D$ is an FFD, it is an atomic IDF-domain and, therefore, a TIDF-domain. In addition, $D$ satisfies ACCP because it is an FFD. Thus, it follows from \cite[Theorem~2.1]{BD76} that $D$ is Archimedean.
	\smallskip
	
	(b) $\Rightarrow $ (c): This is obvious. 
	\smallskip
	
	(c) $\Rightarrow $ (a): Since $D$ is a TIDF-domain, it is also an IDF-domain. Thus, all we need to show is that~$D$ is atomic. To do so, let $x_1$ be a nonzero nonunit of~$D$. Since $D$ is a TIDF-domain, we can take $a_1 \in \mathcal{A}(D)$ such that $a_1 \mid_D x_1$. Because $a_1$ is an atom, $\bigcap_{n \in \nn}  a_1^n D = (0).$ So there is an $n_1 \in \nn$ such that $a_1^{n_1} \mid_D x_1$ and $a_1^{n_{1}+1} \nmid_D x_1$. Set $x_2 = x_1/a_{1}^{n_{1}}$. If $x_2$ is a nonunit, then we can proceed as before to obtain $a_2 \in \mathcal{A}(D)$ and $n_2 \in \nn$ such that $a_2^{n_{2}} \mid_D x_2$ but $a_{2}^{n_{2}+1} \nmid_D x_2$. Continuing in this fashion, we can find $a_1, \dots, a_r \in \mathcal{A}(D)$ and $n_1, \dots, n_r \in \nn$ such that $x_{r+1} := x/(a_{1}^{n_{1}} a_{2}^{n_{2}} \cdots a_{r}^{n_{r}}) \in D^*$. Observe that this process cannot continue indefinitely because $x_1$ is divisible in $D$ by only finitely many atoms up to associates. Hence $x_1$ is a product of atoms. Since the choice of $x_1$ was arbitrary, we conclude that~$D$ is atomic. Thus, $D$ is an atomic IDF-domains, and so an FFD.
\end{proof}
\smallskip

Although every integral domain satisfying ACCP is Archimedean, satisfying ACCP does not prevent an integral domain from having a nonzero nonunit with an infinite number of non-associated irreducible divisors. For example, the ring $\qq+X\rr[X]$ satisfies ACCP by \cite[Proposition~1.2]{AAZ90} but its element~$X^2$ has infinitely many distinct irreducible divisors, namely, $X/r$ for any $r \in \rr \setminus \qq$. Thus, the TIDF condition is needed in the statement of Proposition~\ref{prop:xe}. Yet the TIDF condition and atomicity together are excessive as we already know that the IDF condition and atomicity together already guarantee the finite factorization property. On the other hand, the TIDF condition alone does not even guarantee atomicity; for instance, we have already seen in Example~\ref{ex:nontrivial non-atomic TIDF-domain} that $D = \zz_{(p)} + X \cc \ldb X \rdb$ is a non-atomic TIDF-domain. Moreover, we have shown in the same example that $\mathcal{A}(D) = p D^\times$, and we can readily see that $p$ is prime in $D$. Thus, $D$ is an AP-domain. This thwarts any hope of using the TIDF condition in tandem with the AP condition to achieve the finite factorization property. However, in the class consisting of AP-domains, we can refine our understanding of the TIDF property.
\smallskip

\begin{cor} \label{cor:xg}
	For an AP-domain $D$, the following conditions are equivalent.
	\begin{enumerate}
		\item[(a)] $D$ is a UFD.
		\smallskip
		
		\item[(b)]$D$ is a completely integrally closed TIDF-domain.
		\smallskip
		
		\item[(c)] $D$ is an Archimedean TIDF-domain.
		\smallskip
		
		\item[(d)] $D$ is a TIDF-domain such that $\bigcap_{n \in \nn} (p^{n}) = (0)$ for every prime element $p$.
	\end{enumerate}
\end{cor}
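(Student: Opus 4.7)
The plan is to establish the cycle (a) $\Rightarrow$ (b) $\Rightarrow$ (c) $\Rightarrow$ (d) $\Rightarrow$ (a), with Proposition~\ref{prop:xe} doing the heavy lifting for the closing implication. In spirit, the corollary is little more than a translation of Proposition~\ref{prop:xe} into the AP-setting, where the distinction between atoms and primes collapses.

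For (a) $\Rightarrow$ (b), I would appeal to two classical facts: every UFD is a Krull domain, hence completely integrally closed, and every UFD is an FFD, hence (being atomic with the IDF property and atomic elements giving the Furstenberg property) a TIDF-domain. For (b) $\Rightarrow$ (c), the plan is to verify the folklore observation that complete integral closure forces the Archimedean property: given a nonunit $x \in D$ and any nonzero $y \in \bigcap_{n \in \nn} x^n D$, writing $y = x^n r_n$ with $r_n \in D$ yields $y \cdot (1/x)^n = r_n \in D$ for every $n \in \nn_0$, so complete integral closure forces $1/x \in D$, contradicting the fact that $x$ is a nonunit. The implication (c) $\Rightarrow$ (d) is immediate since every prime element is a nonunit.

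The bulk of the argument is (d) $\Rightarrow$ (a). Since $D$ is an AP-domain, the set $\mathcal{A}(D)$ coincides with the set of prime elements of $D$, so the hypothesis in (d) is exactly the statement that $\bigcap_{n \in \nn} a^n D = (0)$ for each $a \in \mathcal{A}(D)$. Together with the TIDF assumption, this places $D$ in the scope of condition~(c) of Proposition~\ref{prop:xe}, which forces $D$ to be an FFD. In particular, $D$ is atomic; and in an atomic AP-domain every nonunit factors into primes, so such factorizations are automatically essentially unique, making $D$ a UFD. I do not anticipate any genuine obstacle here; the only subtlety worth flagging is the need to identify atoms with primes in order to line up (d) with condition~(c) of Proposition~\ref{prop:xe}.
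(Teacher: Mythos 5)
Your proposal is correct and follows essentially the same route as the paper: the chain (a) $\Rightarrow$ (b) $\Rightarrow$ (c) $\Rightarrow$ (d) is dispatched by standard facts (the paper simply calls these immediate), and (d) $\Rightarrow$ (a) is obtained exactly as in the paper by identifying atoms with primes in an AP-domain, invoking condition~(c) of Proposition~\ref{prop:xe} to get an FFD, and concluding that an atomic AP-domain is a UFD.
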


\begin{proof}
	(a) $\Rightarrow$ (b) $\Rightarrow$ (c) $\Rightarrow$ (d): These implications follow immediately.
	\smallskip
	
	(d) $\Rightarrow$ (a): As $D$ is an AP-domain by assumption, the condition~(c) of Proposition~\ref{prop:xe} holds and, therefore, $D$ is an FFD. In particular, $D$ is atomic. Since $D$ is an atomic AP-domain, it must be a UFD.
%
%
\end{proof}

\smallskip
\subsection{Polynomial Rings and Monoid Rings}

As proved by Malcolmson and Okoh in~\cite{MO09}, the IDF property does not ascend, in general, from an integral domain to its ring of polynomials. However, it was proved in~\cite[Theorem~2.1]{EK18} that the IDF property ascends to rings of polynomials if we restrict to the subclass of MCD-finite domains. A common divisor $d \in D$ of a nonempty subset $S$ of $D^*$ is called a \emph{maximal common divisor} if $\gcd S/d = 1$. Following~\cite{EK18}, we say that an integral domain~$D$ is \emph{MCD-finite} if every finite subset of $D^*$ has only finitely many maximal common divisors up to associates.

As for the case of the IDF property, the TIDF property ascends from an integral domain to its polynomial ring if we restrict to the class of MCD-finite domains. We will deduce this as a consequence of the following proposition. 

\begin{prop} \label{prop:Furstenberg in polynomial rings}
	Let $D$ be an integral domain, and let $\Gamma$ be a nonempty set. Then the following conditions are equivalent.
	\begin{enumerate}
		\item[(a)] $D$ is a Furstenberg domain.
		\smallskip
		
		\item[(b)] $D[X]$ is a Furstenberg domain.
		\smallskip
		
		\item[(c)] $D[X_\gamma \mid \gamma \in \Gamma]$ is a Furstenberg domain, where $D[X_\gamma \mid \gamma \in \Gamma]$ denotes the polynomial ring over~$D$ in the set of indeterminates $\{X_\gamma \mid \gamma \in \Gamma\}$.
	\end{enumerate}
\end{prop}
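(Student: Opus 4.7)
The plan is to establish the equivalences via (a) $\Leftrightarrow$ (b) together with (a) $\Leftrightarrow$ (c), the latter reduced to the finite-variable case. A recurring technical ingredient is the coincidence of unit groups: $D^\times = D[X_\gamma \mid \gamma \in \Gamma]^\times$ for every nonempty set $\Gamma$. From this one extracts two standard consequences that I will use repeatedly. First, a constant $d \in D$ is a nonunit in any polynomial ring over $D$ precisely when it is a nonunit in $D$. Second, $\mathcal{A}(D) \subseteq \mathcal{A}(D[X_\gamma \mid \gamma \in \Gamma])$: if $p \in \mathcal{A}(D)$ and $p = gh$ in the polynomial ring, then a degree consideration in each indeterminate forces $g,h \in D$, and irreducibility of $p$ in $D$ then applies. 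Conversely, a constant which is irreducible in any such polynomial ring is already irreducible in $D$.

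For the easy directions (b) $\Rightarrow$ (a) and (c) $\Rightarrow$ (a), I would take a nonunit $d \in D$; by the coincidence of units, $d$ remains a nonunit in the polynomial ring and so, by hypothesis, is divisible there by some irreducible $g$. Comparing degrees in each indeterminate forces $g \in D$, and by the remarks above $g \in \mathcal{A}(D)$. For (a) $\Rightarrow$ (b), I would induct on $\deg f$. The base case $\deg f = 0$ is immediate from (a) combined with $\mathcal{A}(D) \subseteq \mathcal{A}(D[X])$. In the inductive step, if $f$ is irreducible we are done; otherwise $f = gh$ with both $g,h$ nonunits. If either factor is a constant nonunit, apply the base case to it; else both factors have positive degree, so $\deg g < \deg f$ and the inductive hypothesis applies to $g$. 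In either case, an irreducible divisor of the chosen factor divides $f$.

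Finally, for (a) $\Rightarrow$ (c), given a nonunit $f \in D[X_\gamma \mid \gamma \in \Gamma]$, observe that $f$ involves only finitely many indeterminates, say $f \in R := D[X_{\gamma_1}, \ldots, X_{\gamma_n}]$ for some $n \in \nn$. Iterating (a) $\Rightarrow$ (b) a total of $n$ times shows that $R$ is Furstenberg, so $f$ admits an irreducible divisor $g \in R$. The one slightly subtle step, which I expect to be the main obstacle, is verifying that $g$ remains irreducible after adjoining the remaining indeterminates: if $g = pq$ in the full polynomial ring, then since $g$ involves only $X_{\gamma_1}, \ldots, X_{\gamma_n}$, neither $p$ nor $q$ can have positive degree in any other indeterminate (otherwise the product would), so $p,q \in R$; irreducibility of $g$ in $R$ together with the coincidence of unit groups across $R$ and $D[X_\gamma \mid \gamma \in \Gamma]$ then forces one of $p,q$ to be a unit. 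Everything else amounts to routine degree induction and unit bookkeeping.
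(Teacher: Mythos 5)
Your proposal is correct and follows essentially the same route as the paper: reduce to finitely many indeterminates, use degree considerations (equivalently, the fact that $D$ and each finite-variable subring are divisor-closed in the larger polynomial ring) to transfer irreducibles back and forth, and for (a) $\Rightarrow$ (b) locate either a positive-degree irreducible divisor or a constant nonunit factor to which the Furstenberg property of $D$ applies. The only cosmetic difference is that you run an induction on $\deg f$ where the paper instead picks a nonunit divisor of $f$ of minimal degree; both arguments are sound and interchangeable.
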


\begin{proof}
	(a) $\Rightarrow$ (b): Suppose that $D$ is a Furstenberg domain. Take a nonzero nonunit $f \in D[X]$. Let $g \in D[X]$ be an element of minimum degree among those nonunit polynomials in $D[X]$ dividing $f$. Then the minimality condition on the degree of $g$ guarantees that for any $g_1, g_2 \in D[X]$, the equality $g = g_1 g_2$ implies that either $g_1 \in D$ or $g_2 \in D$. If $g$ is irreducible, then we are done. Assume, therefore, that $g$ is not irreducible. Then we can write $g = h_1 h_2$ for some nonunits $h_1, h_2 \in D[X]$. Assume, without loss of generality, that $h_1$ is a nonunit of $D$. Since $D$ is a Furstenberg domain, there exists $a \in \mathcal{A}(D)$ such that $a \mid_D h_1$. Since $D^*$ is a divisor-closed submonoid of $D[X]^*$, we conclude that $a \in \mathcal{A}(D[X])$ such that $a$ divides $f$ in $D[X]$. Thus, $D[X]$ is a Furstenberg domain.
	\smallskip
	
	(b) $\Rightarrow$ (c): Suppose that $D[X]$ is a Furstenberg domain, and set $T := D[X_\gamma \mid \gamma \in \Gamma]$. An inductive argument, in tandem with the previous implication, immediately shows that any ring of polynomials on finitely many indeterminates over~$D$ is a Furstenberg domain when $D$ is a Furstenberg domain. Now take a nonzero nonunit $f \in T$. Then $f \in S := D[X_1, \dots, X_k]$ for some $X_1, \dots, X_k \in \Gamma$. Since $S$ is Furstenberg, we can take $a \in \mathcal{A}(S)$ such that $a \mid_S f$. Now the fact that $S$ is a divisor-closed subring of~$T$ implies that $a \in \mathcal{A}(T)$ and $a \mid_T f$. Hence $T$ is a Furstenberg domain.
	\smallskip
	
	(c) $\Rightarrow$ (a): This is a consequence of the fact that~$D$ is a divisor-closed subring of the Furstenberg domain $D[X_\gamma \mid \gamma \in \Gamma]$.
\end{proof}

Proposition~\ref{prop:Furstenberg in polynomial rings}, in tandem with Theorem~\ref{thm:IDF ascend for PSP-domains} and \cite[Theorem~2.1]{EK18}, implies that the TIDF property ascends in the class of PSP-domains and in the class of MCD-finite domains, respectively.

\begin{cor} \label{cor:TIDF in PSP and MCD-finite}
	For an integral domain $D$, the following statements hold.
	\begin{enumerate}
		\item If $D$ is a PSP-domain, then $D$ is a TIDF-domain if and only if $D[X]$ is a TIDF-domain.
		\smallskip
		
		\item If $D$ is an MCD-finite domain, then $D$ is a TIDF-domain if and only if $D[X]$ is a TIDF-domain.
	\end{enumerate}
\end{cor}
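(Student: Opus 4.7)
The plan is to reduce the corollary to the combination of the two ingredients that have just been established: Theorem~\ref{thm:IDF ascend for PSP-domains} together with \cite[Theorem~2.1]{EK18} handle the ascent of the IDF-part in the two respective classes, while Proposition~\ref{prop:Furstenberg in polynomial rings} handles the ascent (and descent) of the Furstenberg-part. Since TIDF is by definition the conjunction of Furstenberg and IDF, the two halves glue together painlessly; what remains is simply the descent direction of IDF, which follows from the observation that $D^\ast$ is a divisor-closed submonoid of $D[X]^\ast$.

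For the forward direction of part~(1), I would assume $D$ is a PSP-domain that is also a TIDF-domain. By definition this means $D$ is simultaneously IDF and Furstenberg. Theorem~\ref{thm:IDF ascend for PSP-domains} then lifts the IDF property to $D[X]$, while the implication (a)$\Rightarrow$(b) of Proposition~\ref{prop:Furstenberg in polynomial rings} lifts the Furstenberg property to $D[X]$. Conjunction yields that $D[X]$ is TIDF. Part~(2) is entirely analogous: replace Theorem~\ref{thm:IDF ascend for PSP-domains} by \cite[Theorem~2.1]{EK18}, which gives the ascent of IDF in the class of MCD-finite domains, and combine again with Proposition~\ref{prop:Furstenberg in polynomial rings}.

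For the reverse direction (needed for both parts and requiring no hypothesis on $D$ beyond being an integral domain), I would assume $D[X]$ is a TIDF-domain and show that $D$ inherits both properties. The Furstenberg descent is exactly the implication (b)$\Rightarrow$(a) of Proposition~\ref{prop:Furstenberg in polynomial rings}. For the IDF descent, fix a nonzero $r \in D$ and note that if $a \in \mathcal{A}(D)$ divides $r$ in $D$, then, since $D^\ast$ is divisor-closed in $D[X]^\ast$, the element $a$ is also an irreducible of $D[X]$ dividing $r$; moreover, associates in $D$ remain associates in $D[X]$ because $D^\times = D \cap D[X]^\times$. Hence the assignment $aD^\times \mapsto aD[X]^\times$ embeds $\mathsf{D}_D(r)$ into $\mathsf{D}_{D[X]}(r)$, and the finiteness of the latter forces that of the former.

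There is no substantive obstacle here: once Proposition~\ref{prop:Furstenberg in polynomial rings} and Theorem~\ref{thm:IDF ascend for PSP-domains} are available, the corollary is purely a matter of composing known implications, with the only genuinely verifiable point being the trivial observation above that IDF descends to divisor-closed subrings. Accordingly, the actual write-up should be only a few lines, essentially citing Proposition~\ref{prop:Furstenberg in polynomial rings} together with Theorem~\ref{thm:IDF ascend for PSP-domains} for~(1) and with \cite[Theorem~2.1]{EK18} for~(2).
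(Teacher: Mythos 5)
Your proposal is correct and follows exactly the route the paper takes: the corollary is stated as an immediate consequence of Proposition~\ref{prop:Furstenberg in polynomial rings} combined with Theorem~\ref{thm:IDF ascend for PSP-domains} for part~(1) and with \cite[Theorem~2.1]{EK18} for part~(2). Your explicit verification of the descent of the IDF property via the divisor-closedness of $D^\ast$ in $D[X]^\ast$ is a detail the paper leaves implicit, but it is the standard and intended justification.
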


Rings of polynomials are special cases of monoid domains (where the monoid of exponents are $\nn$ and $\zz$). This may trigger the question of whether the TIDF-property ascends in the context of monoid domains. This is not the case in general, even if we only consider the class of group algebras (i.e., group rings over fields).

\begin{example}\footnote{This example was kindly suggested by Jason Juett.}
	Let $D$ denote the group algebra $\cc[\qq]$ of the group $\qq$ over $\cc$. Since $\qq$ is a group, it is a TIDF-monoid. Let us check that $D$ is actually antimatter. Take a nonunit $f \in D^*$. It is clear that $f$ is not a monomial and, after replacing $f$ by one of its associates, we can assume that $\text{ord} \, f = 0$. Then $f \in \cc[X^{1/n}]$ for some $n \in \nn$. Write $f = g(X^{1/n})$ for some $g \in \cc[X]$. After replacing $n$ by $2n$ if necessary, we can further assume that $\deg g > 1$. Because $f$ is not a monomial and $\cc$ is algebraically closed, we can write $g = g_1 g_2$ for two non-constant polynomials $g_1, g_2 \in \cc[X]$. Then $f = g_1(X^{1/n}) g_2(X^{1/n})$. As none of the factors $g_1(X^{1/n})$ and $g_2(X^{1/n})$ is a monomial, they are nonunits in $D$. Thus, $f$ is not irreducible in $D$. As a result, we conclude that $D$ is antimatter, and so it is not a TIDF-domain.
\end{example}

We can even find a torsion-free reduced monoid $M$ that is Furstenberg (resp., an IDF-monoid) such that the monoid domain $\qq[M]$ is not Furstenberg (resp., an IDF-domain). The following two examples shed some light upon our assertion.

\begin{example} \label{ex:monoid domain that is not Furstenberg}
	By \cite[Proposition~5.1]{CG19}, there exists an atomic additive submonoid $M$ of $\qq_{\ge 0}$ satisfying that $\big\{ \frac{1}{2^n} \mid n \in \nn \big\} \subset M \subset \zz\big[ \frac 12, \frac 13\big]$ (see \cite[Section~5]{CG19} for the explicit construction of $M$). Although~$M$ is a reduced atomic monoid, it does not satisfy ACCP because the ascending chain of principal ideals $\big( \frac 1{2^n} + M \big)_{n \in \nn}$ does not stabilize. Now we can consider the monoid domain $R = \ff_2[M]$, where $\ff_2$ is the field of two elements. Each divisor of the polynomial $X^2 + X + 1$ in $\ff_2[M]$ has the form $\big( X^{2 \frac{1}{2^k}} + X^{\frac{1}{2^k}} + 1\big)^m$ for some $m \in \nn_0$. Indeed, if $X^2 + X + 1 = a(X)b(X)$ in $\ff_2[M]$, then
	\[
		a\big(X^{6^k}\big) b\big(X^{6^k}\big) = \big( X^{2 \cdot 3^k} + X^{3^k} + 1 \big)^{2^k}
	\]
	in the UFD $\ff_2[X]$, and so the fact that $X^{2 \cdot 3^k} + X^{3^k} + 1$ is irreducible in $\ff_2[X]$ (see \cite[Lemma~5.3]{CG19}) guarantees that $a(X) = \big( X^{2 \frac{1}{2^k}} + X^{\frac{1}{2^k}}+ 1 \big)^m$ for some $m \in \nn$. After observing that
	\[
		X^{2 \frac{1}{2^k}} + X^{\frac{1}{2^k}}+ 1 = \big( X^{2 \frac{1}{2^{k+1}}} + X^{\frac{1}{2^{k+1}}} + 1 \big)^2
	\]
	for every $k \in \nn$, one can conclude that $X^2 + X + 1$ is not divisible by any irreducible in $\ff_2[M]$. Hence $\ff_2[M]$ is not a Furstenberg domain.
\end{example}

For any $p \in \pp$, the positive monoid $M_p = \langle 1/p^n \mid n \in \nn_0 \rangle$ is antimatter and, therefore, an IDF-monoid. However, it was proved in \cite[Example~5.5]{fG22} that the monoid domain $\qq[M_p]$ is not an IDF-domain. As being antimatter violates too drastically the Furstenberg condition, here we exhibit an IDF-monoid $M$ with any prescribed size $|\mathcal{A}(M)|$ such that the monoid domain $\qq[M]$ is not an IDF-domain.

\begin{example} \label{ex:IDF in monoid domains}
	Fix $\ell \in \nn \cup \{\infty\}$, and let $(p_n)_{n=1}^\ell$ be a (possibly finite) strictly increasing sequence of primes. Now consider the positive monoid
	\[
		M_\ell := \bigg\langle \frac 1{2^k}, \frac {p_n + 1}{p_n} \ \Big{|} \ k \in \nn \text{ and } n \in \ldb 1, \ell \rdb \bigg\rangle.
	\]
	It is not hard to verify that $\mathcal{A}(M_\ell) = \big\{ \frac{p_n + 1}{p_n} \mid n \in \ldb 1, \ell \rdb \big\}$, and so $|\mathcal{A}(M_\ell)| = \ell$. This implies that $M_\ell$ is not a Furstenberg monoid because $1$ is not divisible by any atom. We claim that $M_\ell$ is an IDF-monoid. Clearly, $M_\ell$ is an IDF-monoid when $\ell \in \nn$. Then we assume that $\ell = \infty$. Fix a nonzero $q \in M_\ell$, and take $N \in \nn$ large enough so that $q \le p_n$ and $p_n \nmid \mathsf{d}(q)$ for any $n \ge N$. Now write $q = m \frac{p_n + 1}{p_n} + r$ for some $m \in \nn_0$ and $r \in M_\ell$ such that $p_n \nmid \mathsf{d}(r)$. Since $p_n \nmid \mathsf{d}(q) \mathsf{d}(r)$, we can infer from $q = m \frac{p_n + 1}{p_n} + r$ that $p_n \mid m$. Therefore $m=0$ as, otherwise, $q \ge \frac{m}{p_n}(p_n + 1) > p_n$. Hence $q$ is not divisible in $M$ by any atom in $\big\{ \frac{p_n + 1}{p_n} \mid n \ge N \big\}$, and so $q$ has only finitely many irreducible divisors. Hence $M_\ell$ is an IDF-monoid.
	
	Let us argue now that $\qq[M_\ell]$ is not an IDF-domain. It suffices to verify that $X-1$ has infinitely many non-associate irreducible divisors in $\qq[M]$. Since
	\[
		X-1 = \big(X^{\frac 1{2^n}} - 1 \big) \prod_{i=1}^n \big( X^{\frac{1}{2^i}} + 1\big)
	\]
	for every $n$, we see that the sequence $\big( X^{\frac 1{2^n}} + 1 \big)_{n \in \nn}$ is a sequence of non-associate divisors of $X-1$ in $\qq[M_\ell]$. We are done once we show that every term in this sequence is irreducible. Let $M$ denote the submonoid $\big\langle \frac 1{2^n} \mid n \in \nn \big\rangle$ of $M_\ell$. Fix $n \in \nn$. If $a(X)$ is a divisor of $X^{\frac 1{2^n}} + 1$ in $\qq[M_\ell]$, then $\deg a(X) \le 1$ and so $a(X) \in \qq[M]$. Now from the fact that $X^{\frac 1{2^n}} + 1 = \Phi_2(X^{1/2^n})$ we can infer that $X^{\frac 1{2^n}} + 1$ is irreducible in $\qq[M]$ (see \cite[Example~5.5]{fG22}). Hence every term in the sequence $\big( X^{\frac 1{2^n}} + 1 \big)_{n \in \nn}$ is irreducible in $\qq[M_\ell]$, and so we conclude that $\qq[M_\ell]$ is not an IDF-domain.
\end{example}

Observe that additive submonoids of $\qq_{\ge 0}$ play a central role in Example~\ref{ex:monoid domain that is not Furstenberg} and Example~\ref{ex:IDF in monoid domains}. These monoids have been recently considered in connection with strongly primariness~\cite{GGT21}, length-factoriality~\cite{CCGS21}, and atomic semirings~\cite{BCG21}.

In the direction of the previous examples, we do not have an answer for the following question.

\begin{question} \label{quest:TIDF in monoid domains}
	Let $M$ be a torsion-free reduced monoid, and let $D$ be an integral domain. If $M$ and~$D$ both satisfy the TIDF property, is necessarily $D[M]$ a TIDF-domain?
\end{question}

With notation as in Question~\ref{quest:TIDF in monoid domains}, further questions of whether certain atomic properties transfer from $M$ and $D$ to $D[M]$ have been investigated in~\cite{GP74,hK98,AJ15,fG22,GL22}.

\bigskip
\section{The $D+M$ Construction}
\label{sec:D+M}

Let $T$ be an integral domain, and let $K$ and~$M$ be a subfield of~$T$ and a nonzero maximal ideal of~$T$, respectively, such that $T = K + M$. For a subdomain $D$ of $K$, set $R = D + M$. In this section, we consider IDF-domains and TIDF-domains through the lens of the $D+M$ construction. We first consider the case when $D$ is not a field.

\begin{theorem} \label{thm:D+M when $D$ is not a field}
	Let $T$ be an integral domain, and let $K$ and $M$ be a subfield of $T$ and a nonzero maximal ideal of $T$, respectively, such that $T = K + M$. For a subring $D$ of $K$, set $R = D + M$. If~$D$ is not a field, then the following statements hold.
	\begin{enumerate}
		\item If $R$ is an IDF-domain, then the subset $\mathcal{A}(R) \cap D$ of $R$ is finite up to associates.
		\smallskip
		
		\item If $R$ is a TIDF-domain, then the subset $\mathcal{A}(R) \cap D$ of $R$ is finite up to associates, and $\mathcal{A}(D)$ is nonempty provided that $D$ is a divisor-closed subring of~$R$.
	\end{enumerate}

	In addition, if $T$ is a local domain, then the following stronger statements hold.
	\begin{enumerate}
		\item[(3)] $R$ is an IDF-domain if and only if $D$ has only finitely many non-associate irreducibles.
		\smallskip
		
		\item[(4)] $R$ is a TIDF-domain if and only if $D$ is a TIDF-domain with a nonempty finite set of non-associate irreducibles.
		\smallskip
	\end{enumerate}
\end{theorem}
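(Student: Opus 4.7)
The plan is to first record general features of $R = D + M$ that hold without the local hypothesis, use these to dispatch (1) and (2), and then exploit the local hypothesis on $T$ to characterize $\mathcal{A}(R)$ and obtain the equivalences (3) and (4).

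\textbf{Preliminaries.} Since $K$ is a subfield of $T$ and $M$ is a proper ideal, $D \cap M = \{0\}$, so $d + m \mapsto d$ is a ring surjection $R \to R/M \cong D$. This yields $R^\times \cap D = D^\times$, and $R$-associate classes inside $D$ coincide with $D$-associate classes. The key preliminary fact is $\mathcal{A}(D) = \mathcal{A}(R) \cap D$: given $d \in \mathcal{A}(D)$ and a factorization $d = r_1 r_2$ in $R$ with $r_i = d_i + m_i$, projection gives $d_1 d_2 = d$, so one may assume $d_1 \in D^\times$; then $r_1 \mid_R d \in T^\times$ (using $D^* \subseteq K^\times \subseteq T^\times$) forces $r_1 \in T^\times$, hence $1 + d_1^{-1}m_1 \in T^\times$; its inverse is congruent to $1$ modulo $M$ and therefore lies in $R$, so $r_1 \in R^\times$.

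\textbf{Parts (1) and (2).} Fix a nonzero $m_0 \in M$. Every nonzero $a \in D$ divides $m_0$ in $R$, since $m_0/a \in M \subseteq R$. Thus $\mathcal{A}(R) \cap D$ consists of irreducible divisors of $m_0$ in $R$, which is finite by the IDF hypothesis; this gives~(1). Part~(2) combines~(1) with the Furstenberg property of $R$ applied to any nonzero nonunit $d \in D$ (available since $D$ is not a field), which produces an $R$-irreducible divisor of $d$; divisor-closedness places this irreducible in $\mathcal{A}(R) \cap D = \mathcal{A}(D)$.

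\textbf{Parts (3) and (4).} Under $T$ local, $M$ is the Jacobson radical, so $1 + M \subseteq T^\times$; this upgrades the preliminaries to $R^\times = D^\times + M$ and further implies that every $r = d + m \in R$ with $d \in D^*$ is $R$-associate to $d$ via $r = d(1 + d^{-1}m)$. Moreover, since $D$ is not a field, no nonzero $m \in M$ is irreducible in $R$, for $m = d_0 \cdot (m/d_0)$ with $d_0$ any nonzero nonunit of $D$ and $m/d_0 \in M$ (both factors being nonunits of $R$). Combined with $\mathcal{A}(D) = \mathcal{A}(R) \cap D$, these observations yield a bijection $\mathcal{A}(R)/R^\times \cong \mathcal{A}(D)/D^\times$. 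Part~(3) is then immediate: the forward direction is~(1), while the converse uses the bijection to bound the irreducible divisors of any element of $R$. For~(4) ``$\Leftarrow$'', each nonzero nonunit $r \in R$ either lies in $M$ (and is divided in $R$ by any fixed $a \in \mathcal{A}(D)$, nonempty by hypothesis) or is $R$-associate to a nonzero nonunit of $D$ (which has an irreducible divisor by the Furstenberg property of $D$). For~(4) ``$\Rightarrow$'', applying Furstenberg of $R$ to a nonzero nonunit of $D$ and passing through the bijection produces an element of $\mathcal{A}(D)$ dividing it in $R$, which then divides in $D$ by projecting; this yields both the nonemptiness of $\mathcal{A}(D)$ and the Furstenberg property of $D$, while IDF of $D$ follows by restricting IDF of $R$ to $D$.

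The main obstacle I expect is the preliminary $\mathcal{A}(D) \subseteq \mathcal{A}(R)$: without $T$ local, a ``mixed'' factor $r_1 = d_1 + m_1$ with $d_1 \in D^\times$ need not be a unit of $R$, so one must rule out that such an $r_1$ could arise as a nontrivial $R$-factor of an irreducible of $D$. The $T^\times$-divisor observation handles this cleanly because every nonzero element of $D$ is a unit in $T$ and divisors in $R$ of a $T$-unit are themselves $T$-units. After this, the local hypothesis enters in a transparent way to pin down the remaining irreducibles of $R$ and furnish~(3) and~(4).
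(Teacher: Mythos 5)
Your proposal is correct, and for parts (1), (2), and the two directions of (4) it follows essentially the same route as the paper: divisibility of every element of $M$ by every nonzero element of $D$ gives (1), the Furstenberg property plus divisor-closedness gives (2), and the relation $1+M\subseteq R^\times$ (from $T$ local) is used to pass between $r=d+m$ and its associate $d$ in (4). There are two genuine deviations worth noting. First, you establish $\mathcal{A}(D)=\mathcal{A}(R)\cap D$ as a preliminary \emph{without} the local hypothesis, by observing that nonzero elements of $D$ are units of $T$ and that inverses of elements of $1+M$ in $T$ again lie in $1+M\subseteq R$; the paper instead invokes locality of $T$ to get $\mathcal{A}(D)\subseteq\mathcal{A}(R)$ in part (4) (this inclusion is really the inertness of $D\subseteq R$, which the paper only records later, in Section~5). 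Your sharper preliminary buys nothing extra for the theorem as stated but makes the logical dependence on locality more transparent: it is needed only to control $R^\times$ and the irreducibles of $R$ lying outside $D$. Second, for part (3) the paper simply cites \cite[Proposition~4.3]{AAZ90}, whereas you give a self-contained proof via the bijection $\mathcal{A}(R)/R^\times\cong\mathcal{A}(D)/D^\times$ (using that no nonzero element of $M$ is irreducible when $D$ is not a field); this is a correct and pleasantly elementary substitute for the citation, and the same bijection then streamlines the forward direction of (4).
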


\begin{proof}
	(1) Every nonzero $d \in D$ divides any $m \in M$; indeed, $m = d(d^{-1})m$ and  it is clear that $d^{-1} m \in KM \subseteq M$. Therefore, since $M$ is a nonzero ideal, the fact that $R$ is an IDF-domain, immediately implies that the subset $\mathcal{A}(R) \cap D$ of $R$ is finite up to associates.
	\smallskip
	
	(2) Assume that $R$ is a TIDF-domain. The first statement was proved in the previous part. For the second statement, assume that $D$ is a divisor-closed subring of $R$. Since~$D$ is not a field, it must contain a nonzero nonunit $d$, which must remain a nonunit in $R$. As $R$ is a TIDF-domain, we can take $a \in \mathcal{A}(R)$ such that $a \mid_R d$. Because $D$ is a divisor-closed subring of $R$, we see that $a \in \mathcal{A}(D)$. Thus, $\mathcal{A}(D)$ is not empty.
	\smallskip
	
	(3) This is \cite[Proposition~4.3]{AAZ90}.
	\smallskip
	
	(4) For the direct implication, suppose that $R$ is a TIDF-domain. Let us argue first that $D$ is a Furstenberg domain. Take a nonzero nonunit $d \in D$. Since $d$ is a nonunit of $R$, we can take $a_1 + m_1 \in \mathcal{A}(R)$ with $a_1 \in D$ and $m_1 \in M$ such that $a_1 + m_1 \mid_R d$. The fact that $T$ is local ensures that $1 + M \subseteq R^\times$ (see \cite[Lemma~4.17]{AG22}). Thus, $1 + a_1^{-1}m_1$ belongs to $R^\times$, and so $a_1$ and $a_1 + m_1$ are associates. Hence $a_1 \in \mathcal{A}(R)$. Since $D^\times = R^\times \cap D$, it follows that $a_1 \in \mathcal{A}(D)$. Thus, $D$ is a Furstenberg domain. Since $T$ is a local domain, the inclusion $\mathcal{A}(D) \subseteq \mathcal{A}(R)$ holds, and so it follows from part~(2) that $D$ has only finitely many irreducibles up to associates in~$R$ and, therefore, up to associates in $D$. Hence $D$ is a TIDF-domain and its subset $\mathcal{A}(D)$ is finite up to associates.
	
	For the reverse implication, suppose that $D$ is a TIDF-domain and that the subset $\mathcal{A}(D)$ of $D$ is (nonempty and) finite up to associates. Because of part~(3), it is enough to verify that every nonzero nonunit $r \in R$ has an irreducible divisor in $R$. This is clear if $r \in M$ because $\mathcal{A}(D) \subseteq \mathcal{A}(R)$; we have already observed that every nonzero element of $D$ divides every element of $M$ in $R$. Assume, therefore, that $r \notin M$ and write $r = d + m$ for some $d \in D^*$ and $m \in M$. Since $T$ is a local domain, $1 + d^{-1}m \in R^\times$, and so $r$ and $d$ are associates in $R$. Now the fact that $D$ is a TIDF-domain guarantees that $r$ is divisible by an irreducible in $D$ and, therefore, by an irreducible in~$R$. Hence $R$ is also a TIDF-domain.
\end{proof}

\begin{cor}
	Let $D$ be an integral domain with quotient field $K$, and let $L$ be a field extension of~$K$. Consider the subrings $R = D + X L[X]$ and $S = D + X L \ldb X \rdb$ of $L[X]$ and $L\ldb X \rdb$, respectively. If $D$ is not a field, then the following statements hold.
	\begin{enumerate}
		\item $R$ is a TIDF-domain if and only if $\mathcal{A}(D)$ is a nonempty finite subset of $D$ up to associates.
		\smallskip
		
		\item $S$ is a TIDF-domain if and only if $\mathcal{A}(D)$ is a nonempty finite subset of $D$ up to associates.
	\end{enumerate}
\end{cor}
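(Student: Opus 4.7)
The plan is to derive both statements as applications of Theorem~\ref{thm:D+M when $D$ is not a field}, with part~(2) being essentially immediate and part~(1) requiring a supplementary argument because the ambient ring there is not local. For part~(2), I take $T := L\ldb X \rdb$, which is a local domain with nonzero maximal ideal $M := XL\ldb X \rdb$ satisfying $T = L + M$. Viewing $D$ as a subring of the subfield $L \subseteq T$, part~(4) of Theorem~\ref{thm:D+M when $D$ is not a field} applies directly to give the claimed equivalence. For the forward direction of part~(1), a routine degree check confirms that $D$ is a divisor-closed subring of $R$ and that $R^\times \cap D = D^\times$, so that $\mathcal{A}(R) \cap D = \mathcal{A}(D)$; parts~(1) and~(2) of the theorem then yield that $\mathcal{A}(D)$ is nonempty and finite up to associates whenever $R$ is TIDF.

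For the reverse direction of part~(1), I argue directly that $R$ is TIDF, verifying Furstenberg-ness and IDF-ness separately. To see that $R$ is Furstenberg, take a nonzero nonunit $r = c + Xg(X) \in R$. If $c$ is either zero or a nonunit of $D$, I pick an atom $a \in \mathcal{A}(D)$ (dividing $c$ when $c \neq 0$, using that $D$ is Furstenberg) and observe that $r/a = c/a + X(g/a) \in R$, so $a \in \mathcal{A}(R)$ divides $r$ in $R$. If $c \in D^\times$, then $r$ is associate to some $1 + Xh(X)$, and I induct on $\deg r$: the base case $\deg r = 1$ shows that $r$ is irreducible in $R$, and the inductive step uses that any factorization $r = f_1 f_2$ in $R$ must have $f_i(0) \in D^\times$ (since $r(0) = 1$), forcing both $f_i$ to have positive, strictly smaller degree. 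For IDF-ness, the critical observation is that, because $D$ is not a field, any $p \in R$ with $p(0) = 0$ and $\deg p \geq 1$ is reducible via $p = c \cdot (p/c)$ for some nonunit $c \in D^*$, so every positive-degree irreducible of $R$ satisfies $p(0) \neq 0$. A rescaling argument (replacing $(g,h)$ in any factorization $p = gh$ over $L[X]$ with $(g/g(0),\, g(0)h)$) then upgrades this to the statement that $p$ is already irreducible in $L[X]$. Finally, for each $L^\times$-associate class of irreducible factors of $r$ in $L[X]$, at most one $R$-associate class of irreducibles of $R$ lies in it (irreducibility of $u_1 F$ and $u_2 F$ in $R$ forces $u_iF(0) \in D^\times$, whence $u_1/u_2 \in D^\times$), which bounds the positive-degree irreducible divisors of $r$ in $R$ by the finitely many monic irreducible factors of $r$ in $L[X]$.

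The main obstacle is the reverse direction of part~(1): transferring the TIDF property from $D$ to $R$ without the benefit of $L[X]$ being local. The crucial ingredient is the matching between positive-degree irreducibles of $R$ and irreducibles of $L[X]$, which uses $D$ not being a field (to rule out $p$ with $p(0) = 0$) and depends on rescaling factorizations in $L[X]$ so that constant terms lie in $D$.
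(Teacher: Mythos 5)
Your overall route is the same as the paper's: the forward implications come from Theorem~\ref{thm:D+M when $D$ is not a field} together with the observation that $D$ is divisor-closed in $R$ with $R^\times \cap D = D^\times$, and the reverse implication for $R = D + XL[X]$ is handled by matching the positive-degree irreducibles of $R$ with irreducibles of $L[X]$ and counting separately the constant irreducible divisors coming from $\mathcal{A}(D)$. Your execution of the IDF part is correct and in fact more careful than the paper's: the rescaling $(g,h) \mapsto (g/g(0),\, g(0)h)$ showing that a positive-degree irreducible of $R$ with nonzero constant term is already irreducible in $L[X]$, and the observation that each $L^\times$-class of irreducible factors contains at most one $R$-class, are exactly the facts the paper compresses into one sentence. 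Your use of Theorem~\ref{thm:D+M when $D$ is not a field}(4) for $S = D + XL\ldb X \rdb$ (legitimate, since $L\ldb X \rdb$ is local) is also a cleaner route than the paper's ``similar to part~(1).''

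The genuine gap is your appeal to ``$D$ is Furstenberg'' in the reverse direction: that is not among the stated hypotheses, which only give that $\mathcal{A}(D)$ is nonempty and finite up to associates. The place where it is unavoidable is the case of a nonzero nonunit constant $r \in D \subseteq R$ (your case ``$c$ a nonunit of $D$'' with $g = 0$): since $D$ is divisor-closed in $R$, every irreducible divisor of $r$ in $R$ already lies in $\mathcal{A}(D)$ and divides $r$ in $D$, so no amount of work inside $L[X]$ can produce one; you genuinely need every nonunit of $D$ to have an irreducible divisor in $D$. (When $c$ is a nonzero nonunit but $g \neq 0$, you do not need this: $r = c\,(1 + Xg/c)$ and the second factor is a positive-degree nonunit with constant term $1$, which your own argument shows has an irreducible divisor in $R$.) Be aware that the paper's proof has the same lacuna --- in the case $m = 0$ it simply asserts that the number of irreducible divisors of $\alpha$ lies in $\ldb 1, n_D + n_f \rdb$ without justifying the lower bound --- so the statement as printed really requires the additional hypothesis that $D$ is Furstenberg (equivalently, that $D$ is a TIDF-domain, given that $\mathcal{A}(D)$ is finite), exactly as in part~(4) of Theorem~\ref{thm:D+M when $D$ is not a field}. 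You should either add that hypothesis explicitly or flag that the equivalence as stated cannot be closed from ``$\mathcal{A}(D)$ nonempty and finite'' alone.
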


\begin{proof}
	(1) The direct implication follows from Theorem~\ref{thm:D+M when $D$ is not a field} because $D$ is a divisor-closed subring of~$R$. For the reverse implication, consider a general nonzero nonunit element $\alpha X^m(1 + Xf(X))$ of $D + XL[X]$, where $\alpha \in L$ and $f(X) \in L[X]$. Observe that every irreducible divisor of $1 + Xf(X)$ in $R$ is also an irreducible divisor of $1 + Xf(X)$ in $L[X]$ and any two of such divisors are associates in $R$ if and only if they are associates in $L[X]$. As $L[X]$ is a UFD, $1 + Xf(X)$ has a finite number $n_f$ of irreducible divisors in $R$ up to associates. Let $n_D$ be the number of irreducibles of~$D$ up to associates. If $m > 0$, then the number of irreducible divisors of $\alpha X^m(1+Xf(X))$ is $n_D + n_f$. On the other hand, if $m=0$, then $\alpha \in D$, and so the number of irreducible divisors of $\alpha$ in $D$ up to associates is at most $n_D$, in which case, the number of irreducible divisors of $\alpha X^m (1+Xf(X))$ up to associates belongs to $\ldb 1, n_D +n_f \rdb$. Thus, $R$ is a TIDF-domain.
	\smallskip
	
	(2) This is similar to part~(1).
\end{proof}

\medskip
It follows from \cite[Lemma~4.18]{AG22} that $\mathcal{A}(R) \subseteq T^\times \cup \mathcal{A}(T)$ and, if $D$ is a field, $\mathcal{A}(R) \subseteq \mathcal{A}(T)$. We proceed to consider the TIDF property under the $D+M$ construction when $D$ is a field.

\begin{theorem} \label{thm:D+M construction when D is a field}
	Let $T$ be an integral domain, and let $K$ and $M$ be a subfield of $T$ and a nonzero maximal ideal of $T$, respectively, such that $T = K + M$. For a subfield $F$ of~$K$, set $R = F + M$.
	\begin{itemize}
		\item When $M \cap \mathcal{A}(R) \neq \emptyset$, then the following statements hold.
		\smallskip
		\begin{enumerate}
			\item[(1)] $R$ is an IDF-domain if and only if $T$ is an IDF-domain and $|K^\times/F^\times| < \infty$.
			\smallskip
			
			\item[(2)] $R$ is a TIDF-domain if and only if $T$ is a TIDF-domain and $|K^\times/F^\times| < \infty$.
		\end{enumerate}
		\smallskip
		
		\item When $M \cap \mathcal{A}(R) = \emptyset$, then the following statements hold.
		\smallskip
		\begin{enumerate}
			\item[(3)] $R$ is an IDF-domain if and only if $T$ is an IDF-domain.
			\smallskip
			
			\item[(4)] $R$ is a TIDF-domain if and only if $T$ is a TIDF-domain.
		\end{enumerate}
	\end{itemize}
\end{theorem}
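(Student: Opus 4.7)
The plan is to exploit the containment $\mathcal{A}(R) \subseteq \mathcal{A}(T)$ from \cite[Lemma~4.18]{AG22}, which applies because $D = F$ is a field. This reduces the comparison of sets of irreducible divisors between $R$ and $T$ to an analysis of how a single $T$-associate class of a $T$-irreducible decomposes into $R$-associate classes of $R$-irreducibles. The dichotomy between the cases $M \cap \mathcal{A}(R) \neq \emptyset$ and $M \cap \mathcal{A}(R) = \emptyset$ is explained by the following observation that I would establish first: if $p \in \mathcal{A}(R) \cap M$, then $kp \in K^\times M \subseteq M \subseteq R$ for every $k \in K^\times$, so the entire $K^\times$-line through $p$ lies in $R$ and (up to $R$-associates) splits into the cosets of $F^\times$ in $K^\times$; but if $p \in \mathcal{A}(R) \setminus M$, then writing $p = f + m$ with $f \in F^\times$ forces $kp \in R \Rightarrow k \in F$, so the splitting phenomenon disappears.

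For parts~(1) and~(2), assume $M \cap \mathcal{A}(R) \neq \emptyset$ and fix $a \in M \cap \mathcal{A}(R)$. For the forward direction of~(1), I would first derive $|K^\times/F^\times| < \infty$ by showing that the family $\{ka : k \in K^\times\}$ consists of irreducibles of $R$ that each divide $a^2 \in M \subseteq R$ (via $a^2 = (ka)(k^{-1}a)$ with both factors in $R$), and that $ka$ and $k'a$ are $R$-associate iff $kF^\times = k'F^\times$; the finiteness of the set of non-associate irreducible divisors of $a^2$ in $R$ then forces $|K^\times/F^\times| < \infty$. To see that $T$ is IDF, I would associate to each $t \in T^*$ the element $at \in M \subseteq R$ and match the $T$-associate classes of $T$-irreducible divisors of $t$ to $R$-associate classes of $R$-irreducible divisors of $at$ (rescaling by $K^\times$ when needed to land in $R$), obtaining finiteness for $t$ from finiteness for $at$. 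For the reverse direction, given $r \in R^*$ each $R$-irreducible divisor of $r$ is a $T$-irreducible divisor, and within each of the finitely many $T$-associate classes at most $|K^\times/F^\times|$ many $R$-associate classes appear. Part~(2) follows from~(1) together with the Furstenberg equivalence: an $R$-irreducible divisor of a nonunit of $R$ is also a $T$-irreducible divisor, and conversely, a $T$-irreducible divisor of $r \in R^*$ can be rescaled into $R$ and shown to be $R$-irreducible (or else produces one after minor adjustments).

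For parts~(3) and~(4), assume $M \cap \mathcal{A}(R) = \emptyset$; then every $R$-irreducible lies outside $M$, so the $K^\times$-splitting does not occur, and each $T$-associate class of $T$-irreducibles contains at most one $R$-associate class of $R$-irreducibles. It follows that, for $r \in R^*$, the $R$-associate classes of $R$-irreducible divisors of $r$ inject into the $T$-associate classes of $T$-irreducible divisors of $r$; the reverse inequality and the treatment of arbitrary $t \in T^*$ proceed by the same $at$-rescaling trick as in Case~1, delivering~(3), and~(4) follows by combining~(3) with the Furstenberg equivalence. The main obstacle, which I expect to require the most careful bookkeeping, is the verification in the forward direction of~(1) that each $ka$ is actually irreducible in $R$ and that the coset $kF^\times$ determines the $R$-associate class of $ka$; this depends on a precise understanding of $R^\times$ in relation to $T^\times \cap R$, and is also what generates the asymmetry between the two cases of the theorem.
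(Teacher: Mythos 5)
Your proposal is correct in outline, but it travels a different road from the paper. For the IDF statements (1) and (3) the paper simply cites \cite[Proposition~4.2]{AAZ90}, whereas you reprove them from scratch via the coset count: the family $\{ka \mid k \in K^\times\}$ of irreducible divisors of $a^2$, together with the identities $K^\times \cap R^\times = F^\times$ and $T^\times = K^\times R^\times$ (so $T^\times/R^\times \cong K^\times/F^\times$), which is essentially the argument underlying the cited result; this is more self-contained but duplicates known work. For the TIDF statements the paper reduces to transferring the Furstenberg property and then runs an explicit two-case analysis on a nonunit $r \in R$ (the case $r \notin M$, where $r$ is associate to some $1+m$ and any $T$-irreducible divisor can be normalized to $1+m'$ with cofactor automatically in $R$; and the case $r \in M$, subdivided according to whether the irreducible divisor lies in $M$). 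Your sketch compresses exactly this step into ``rescaled into $R$ \dots (or else produces one after minor adjustments),'' and that hedge is where the only real content sits: after rescaling a $T$-irreducible divisor $b$ of $r$ into $R$, one must also check that the \emph{cofactor} lands in $R$, which requires a second rescaling by $K^\times$ and amounts to the inertness of the extension $R \subseteq T$ (recorded by the paper in Section~6). This is routine but should be written out, since it is precisely the asymmetry your first paragraph identifies. One further small slip: in part (3) you invoke ``the same $at$-rescaling trick as in Case~1,'' but there $a$ was chosen in $M \cap \mathcal{A}(R)$, which is empty in Case~2; you should instead take $a$ to be any nonzero element of $M$, which is all the trick actually uses.
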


\begin{proof}
	(1) This is \cite[Proposition~4.2(a)]{AAZ90}.
	\smallskip
	
	(2) For the direct implication, suppose that $R$ is a TIDF-domain. In light of part~(1), we only need to argue that $T$ is a Furstenberg domain. To do so, fix a nonzero nonunit $t \in T$. After replacing $t$ by one of its associates in $T$, we can assume that $t \in R$. Since $t$ is a nonunit in~$T$, it must be a nonunit in~$R$. This, along with the fact that $R$ is a TIDF-domain, ensures the existence of $a \in \mathcal{A}(R)$ such that $a \mid_R t$. Because $F$ is a field, it follows from \cite[Lemma~4.18]{AG22} that $\mathcal{A}(R) \subseteq \mathcal{A}(T)$. Thus, $a \in \mathcal{A}(T)$ such that $a \mid_T t$. Hence $T$ is Furstenberg and so a TIDF-domain.
	\smallskip
	
	Conversely, suppose that $T$ is a TIDF-domain and $|K^\times/F^\times| < \infty$. Because of part~(1), it suffices to show that $R$ is a Furstenberg domain. To do this, fix a nonzero nonunit $r \in R$. From the fact that $F$ is a field, one can deduce that $R^\times = T^\times \cap R$ (see \cite[Lemma~4.7]{AG22}) and, therefore, conclude that $r$ is a nonunit in $T$. We split the rest of the proof in the following two cases.
	\smallskip
	
	\noindent \emph{Case 1:} $r \notin M$. After replacing~$r$ by one of its associates in $R$, we can assume that $r = 1+m$ for some nonzero $m \in M$. Because $1+m \notin T^\times$ and $T$ is a TIDF-domain, $1+m$ has an irreducible divisor in~$T$, and we can assume that such an irreducible divisor of $1+m$ has the form $1 +m'$ for some $m' \in M$ (note that no element of $M$ can divide $1+m$ in~$T$). Now the fact that $1+m' \in \mathcal{A}(T)$ implies that $1+m' \in \mathcal{A}(R)$. Therefore $1+m'$ is an irreducible in $R$ such that $1+m' \mid_R r$. Thus, $R$ is Furstenberg and so a TIDF-domain.
	\smallskip
	
	\noindent \emph{Case 2:} $r \in M$. As $T$ is a TIDF-domain, we can pick $a \in \mathcal{A}(T)$ such that $a \mid_T r$. If $a \notin M$, then $a$ is associate in $T$ with an irreducible element of the form $1 + m$ for some nonzero $m \in M$. Thus, $1 + m \in \mathcal{A}(R)$ and $1+m \mid_R r$. Suppose, on the other hand, that $a \in M$. Since every element in $T$ is associate in $T$ with an element of $R$, after replacing~$a$ by one of its associates in $T$, we can assume that $a \mid_R r$ (note that every associate of $a$ in $T$ belongs to~$M$). As $R^\times = T^\times \cap R$, it follows that $a \in \mathcal{A}(R)$. Hence $R$ is a TIDF-domain.
	\smallskip
	
	(3) This is \cite[Proposition~4.2(b)]{AAZ90}.
	\smallskip
	
	(4) This mimics the argument given for part~(2) as the hypothesis $M \cap \mathcal{A}(R) \neq \emptyset$ was only needed to transfer the IDF property.
\end{proof}

As in the case when $D$ is not a field, we highlight the following special cases.

\begin{cor}
	For a field extension $K \subseteq L$, consider the subrings $R = K + X L[X]$ and $S = K + X L \ldb X \rdb$ of $L[X]$ and $L\ldb X \rdb$, respectively. Then the following statements hold.
	\begin{enumerate}
		\item $R$ is a TIDF-domain if and only if $|L^{\ast}/K^{\ast }|<\infty .$
		\smallskip
		
		\item $S$ is a TIDF-domain if and only if $|L^{\ast}/K^{\ast }| < \infty$.
	\end{enumerate}
\end{cor}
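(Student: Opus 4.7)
The plan is to apply Theorem~\ref{thm:D+M construction when D is a field} directly in both cases. In part~(1), set $T = L[X]$, let the subfield of $T$ appearing in the theorem be $L$ itself, and let $M = XL[X]$; this is a nonzero maximal ideal (since $L[X]/XL[X] \cong L$) and $T = L + M$. Then $R = K + M$ plays the role of $F + M$ with $F = K$, so the corollary's notation matches the theorem's setup. Since $L[X]$ is a UFD it is, in particular, a TIDF-domain, so the conclusion of the theorem reduces to $|L^\times/K^\times| < \infty$, provided that we are in the case $M \cap \mathcal{A}(R) \neq \emptyset$.

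To verify this latter condition I will check that $X \in \mathcal{A}(R)$. Because $K$ is a field, a unit of $R$ is necessarily a unit of $L[X]$, so $R^\times = K^\times$. If $X = gh$ with $g,h \in R$ both non-units of $R$, then neither $g$ nor $h$ is a non-zero constant, so both have positive degree as polynomials in $L[X]$; this contradicts the irreducibility of $X$ in $L[X]$. Hence $X$ lies in $M \cap \mathcal{A}(R)$, and part~(2) of Theorem~\ref{thm:D+M construction when D is a field} gives the equivalence $R \text{ is TIDF} \iff |L^\times/K^\times| < \infty$.

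Part~(2) is handled in the same fashion with $T = L\ldb X \rdb$ and $M = XL\ldb X \rdb$, where $T$ is a DVR (hence a TIDF-domain). Here the only mildly subtle point is the description of units: if $g = k + Xh(X) \in S$ with $k \in K^\times$ and $h \in L\ldb X \rdb$, then $g$ is invertible in $L\ldb X \rdb$ and a direct computation shows its inverse has the form $k^{-1} + X \varphi(X)$ with $\varphi \in L\ldb X \rdb$, so $g^{-1} \in S$. Thus $S^\times = \{k + Xh(X) : k \in K^\times, \ h \in L\ldb X \rdb\}$, and every non-unit of $S$ lies in $XL\ldb X \rdb$. Consequently, $X = gh$ with $g,h$ non-units in $S$ would force $X \in X^2 L\ldb X \rdb$, impossible; hence $X \in M \cap \mathcal{A}(S)$, and Theorem~\ref{thm:D+M construction when D is a field}(2) again yields $S \text{ is TIDF} \iff |L^\times/K^\times| < \infty$.

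The main (and essentially only) obstacle is bookkeeping: reconciling the two different uses of the letter $K$ (the corollary's ground field versus the theorem's subfield of~$T$) and checking that the relevant $X$ is an irreducible of the $D+M$ construction so that we land in the first of the two cases of Theorem~\ref{thm:D+M construction when D is a field}. Once this is done, the full strength of the theorem does the work, and no further computation is needed.
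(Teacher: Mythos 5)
Your proof is correct and follows exactly the route the paper intends: the corollary is stated as an immediate special case of Theorem~\ref{thm:D+M construction when D is a field} (the paper gives no separate written proof), and your verification that $X \in M \cap \mathcal{A}(R)$ (resp.\ $X \in M \cap \mathcal{A}(S)$) correctly places both rings in the first case of that theorem, after which the TIDF property of the UFD $L[X]$ (resp.\ the DVR $L\ldb X \rdb$) reduces the equivalence to $|L^\times/K^\times| < \infty$.
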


\medskip
We end this section by briefly discussing the $D+M$ construction on the class of PIDF-domains, which is
another natural class of domains strictly contained in that of IDF-domains. Let $R$ be an integral domain. Recall that~$R$ is a PIDF-domain if for every nonzero $x \in R$ the set $\bigcup_{n \in \nn} \mathsf{D}(x^n)$ is finite, where $\mathsf{D}(x)$ denotes the set of associate classes of irreducible divisors of $x$ in~$R$. As it is the case for the IDF property, the PIDF property does not ascend from an integral domain~$D$ to its ring of polynomials (see \cite[Theorem~2.2]{MO09}). Examples of PIDF-domains include Krull domains and, in particular, Dedekind domains and rings of integers of algebraic number fields (see \cite[Corollary~3.3]{MO06}). Every antimatter domain (for instance, the monoid ring $\ff_2[\qq_{\ge 0}]$) is a PIDF-domain that is not a TIDF-domain. As the following example illustrates, there are TIDF-domains that are not PIDF-domains.

\begin{example}
	Fix $p \in \pp$ with $p \equiv 1 \pmod{4}$. Now consider the subring $\zz[i p]$ of the ring of Gaussian integers $\zz[i]$. Since $\zz[i]$ is an FFD and the quotient group $\zz[i]^\times / \zz[i p]^\times$ is finite, it follows from \cite[Theorem~3]{AM96} that $\zz[i p]$ is also an FFD, and so a TIDF-domain. On the other hand, it follows from \cite[Theorem~2.17]{MO06} that $\zz[i p]$ is not a PIDF-domain (the condition $p \equiv 1 \pmod{4}$ is needed for this last statement to hold).
\end{example}

\begin{prop} \label{prop:D+M construction}
	Let $T$ be an integral domain, and let $K$ and $M$ be a subfield of $T$ and a nonzero maximal ideal of $T$, respectively, such that $T = K + M$. For a subfield $F$ of~$K$, set $R = F + M$. Then the following statements hold.
	\begin{enumerate}
		\item If $M \cap \mathcal{A}(R) \neq \emptyset$, then $R$ is a PIDF-domain if and only if $T$ is a PIDF-domain and the group $K^\times/F^\times$ is finite.
		\smallskip
		
		\item If $M \cap \mathcal{A}(R) = \emptyset$, then $R$ is a PIDF-domain if and only if $T$ is a PIDF-domain.
	\end{enumerate}
\end{prop}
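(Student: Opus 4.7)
My plan is to mirror Theorem~\ref{thm:D+M construction when D is a field} by constructing, for each nonzero $r \in R$, an explicit surjection $\Phi$ from the set of $R$-associate classes of irreducibles in $R$ dividing some power of $r$ onto the set of $T$-associate classes of irreducibles in $T$ dividing some power of $r$, defined by $[a]_R \mapsto [a]_T$. This is well defined because $F$ is a field, which yields $R^\times = T^\times \cap R$ and hence $\mathcal{A}(R) \subseteq \mathcal{A}(T)$. The plan is to show that $\Phi$ is surjective, that its fibers have cardinality at most $|K^\times/F^\times|$, and that the fibers are singletons in case~(2); both directions of each stated equivalence then follow immediately.

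For surjectivity, fix $b \in \mathcal{A}(T)$ with $b \mid_T r^n$, write $r^n = bq$ with $q \in T$, and decompose $b = k_b + m_b$ and $q = k_q + m_q$ with $k_b, k_q \in K$ and $m_b, m_q \in M$. If $k_b \neq 0$, I would take $b' := k_b^{-1}b = 1 + k_b^{-1}m_b \in 1 + M \subseteq R$; the $K$-component of $r^n/b' = k_b q$ coincides with that of $r^n \in R$ and hence lies in $F$, so $b' \mid_R r^n$. If $k_b = 0$ (so $b \in M$), observe that $R^\times = T^\times \cap R$ forces $M \cap \mathcal{A}(T) = M \cap \mathcal{A}(R)$, so case~(2) excludes this subcase; in case~(1), either $k_q = 0$ (so $q \in M \subseteq R$ and $b \mid_R r^n$ already) or $k_q \neq 0$, in which case $b' := k_q b \in M$ is a $T$-associate of $b$ with $r^n/b' = 1 + k_q^{-1}m_q \in R$. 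In every subcase $b' \in \mathcal{A}(R)$, so $[b']_R$ is a preimage of $[b]_T$. I expect this step, particularly the subcase $b \in M \cap \mathcal{A}(T)$, to be the main technical obstacle: the naive normalization $b/k_b$ is unavailable, and the correct $T$-associate $b' = k_q b$ requires extracting information from $q = r^n/b$ and leveraging the fact that $r^n \in R$.

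For fiber control, consider the group homomorphism $\psi \colon T^\times \to K^\times$ induced by $T \twoheadrightarrow T/M \cong K$; a direct check gives $\psi^{-1}(F^\times) = R^\times$. For each $b \in \mathcal{A}(T)$, the map $(bT^\times \cap R)/R^\times \hookrightarrow K^\times/F^\times$ induced by $\psi$ shows that the fiber of $\Phi$ over $[b]_T$ has size at most $|K^\times/F^\times|$, and that the image equals the single coset $k_b^{-1}F^\times$ whenever $b \notin M$. In case~(2), every relevant $b$ satisfies $b \notin M$, so $\Phi$ is a bijection and $R$ is a PIDF-domain if and only if $T$ is a PIDF-domain. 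In case~(1), assuming $R$ is a PIDF-domain, the IDF weakening together with \cite[Proposition~4.2(a)]{AAZ90} yields $|K^\times/F^\times| < \infty$ and the IDF property for $T$; surjectivity of $\Phi$ then upgrades this to the PIDF property for $T$. Conversely, the PIDF property for $T$ together with $|K^\times/F^\times| < \infty$ forces the domain of $\Phi$ to be finite via the fiber bound, yielding the PIDF property for $R$.
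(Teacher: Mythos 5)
Your proposal is correct and follows essentially the same route as the paper's proof: the map $[a]_R \mapsto [a]_T$ with fibers of size at most $|K^\times/F^\times|$ is exactly the paper's $\varphi$, and your surjectivity argument is the explicit version of the paper's normalization of $T$-irreducibles to associates of the form $1+m'$ or $m'$ lying in $R$ (the paper packages that direction as a proof by contradiction, but the content is the same). The only cosmetic difference is that you unify both directions into one surjection-with-bounded-fibers statement and spell out why the fibers are singletons in case~(2), where the paper simply says part~(2) follows the lines of part~(1).
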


\begin{proof}
	(1) Assume that $R$ is a PIDF-domain. In particular, $R$ is an IDF-domain, and it follows from part~(1) of Theorem~\ref{thm:D+M construction when D is a field} that $T$ is an IDF-domain and the group $K^\times/F^\times$ is finite. Suppose, towards a contradiction, that $T$ is not a PIDF-domain. As $T$ is an IDF-domain, for some nonzero $t \in T$ we can choose a strictly increasing sequence $(k_n)_{n \in \nn}$ of positive integers and a sequence $(a_n)_{n \in \nn}$ of pairwise non-associate irreducibles of~$T$ with $a_n \mid_T t^{k_n}$ for every $n \in \nn$. After replacing $t$ by one of its associates in $T$, we can assume that $t = 1 + m$ or $t = m$ for some $m \in M$. Observe now that, for each $n \in \nn$, we can replace~$a_n$ by one of its associates in $T$ so that $a_n = 1 + m_n$ or $a_n = m_n$ for some $m_n \in M$ and $a_n \mid_R t^{k_n}$. Therefore the set $A := \{a_n \mid n \in \nn\}$ is a subset of $\mathcal{A}(R)$ that is not finite up to associates. However, the fact that $aR^\times \in \bigcup_{n \in \nn} \mathsf{D}_R(t^n)$ for all $a \in A$ contradicts that $R$ is a PIDF-domain.
	\smallskip
	
	Conversely, suppose that $T$ is a PIDF-domain and the group $K^\times/F^\times$ is finite, and then take $\alpha_1, \dots, \alpha_m \in K^\times$ such that $K^\times/F^\times = \{\alpha_1 F^\times, \dots, \alpha_m F^\times\}$. Fix a nonzero nonunit $r \in R$. Since~$F$ is a field, it follows from \cite[Lemma~4.18]{AG22} that $\mathcal{A}(R) \subseteq \mathcal{A}(T)$, and so $aT^\times \in \bigcup_{n \in \nn} \mathsf{D}_T(r^n)$ for every $a \in \mathcal{A}(R)$ with $aR^\times \in \bigcup_{n \in \nn} \mathsf{D}_R(r^n)$. Hence we can define $\varphi \colon \bigcup_{n \in \nn} \mathsf{D}_R(r^n) \to \bigcup_{n \in \nn} \mathsf{D}_T(r^n)$ by $\varphi(aR^\times) = aT^\times$. Now for $bT^\times \in \bigcup_{n \in \nn} \mathsf{D}_T(r^n)$, we see that $\varphi^{-1}(bT^\times) \subseteq \{\alpha_1 b R^\times, \dots, \alpha_m b R^\times\}$. This, along with the fact that $\bigcup_{n \in \nn} \mathsf{D}_T(r^n)$ is finite, guarantees that $\bigcup_{n \in \nn} \mathsf{D}_R(r^n)$ is a finite set. As a result,~$R$ is a PIDF-domain.
	\smallskip
	
	(2) This follows the lines of part~(1) as the hypothesis $M \cap \mathcal{A}(R) \neq \emptyset$ was only needed to transfer the IDF property.
\end{proof}

\bigskip
\section{Localization}
\label{sec:localization}

We conclude this paper studying both the TIDF and the PIDF properties through the lens of localization, establishing results parallel to those given in \cite{AAZ92} for the IDF property.
\smallskip

It is well known that the property of being atomic is not preserved under localization. As the following example illustrates, the same is true for the TIDF property.

\begin{example}
	Fix $q \in \qq_{> 1} \setminus \nn$, and consider the monoid $M = \langle q^n \mid n \in \nn_0 \rangle$. One can verify that $M$ is an atomic monoid with $\mathcal{A}(M) = \{q^n \mid n \in \nn_0\}$ (this also follows from \cite[Theorem~4.1]{CG22}). Indeed, it follows from \cite[Proposition~5.6]{fG19} that $M$ is an FFM. Since $q \notin \nn$, the Grothendieck group of $M$ is
	\[
		G = \bigcup_{m \in \nn} \zz q^m = \Big\{ \frac n{d^k} \ \Big{|} \ n \in \zz \text{ and } k \in \nn_0 \Big\},
	\]
	where $d = \mathsf{d}(q)$. Let $p$ be a prime divisor of $d$, and consider the monoid ring $\ff_p[M]$, where $\ff_p$ is the field of $p$ elements. Since the sequence $(q^n)_{n \in \nn}$ of atoms of $M$ increases to infinity, it follows from \cite[Proposition~4.9]{fG22} that $\ff_p[M]$ is an FFD and, therefore, a TIDF-domain. Observe that the localization of $\ff_p[M]$ at the multiplicative set $S = \{X^m \mid m \in M\}$ is the group ring $\ff_p[G]$. Since~$\ff_p$ is a perfect field of characteristic $p$, every nonunit of $\ff_p[G]$ is the $p$-th power of a nonunit, and so the integral domain $\ff_p[G]$ is an antimatter domain. Thus, it is not a TIDF-domain.
\end{example}

However, we will see that under some special conditions on the multiplicative set, the TIDF property is preserved under localization. Let $A \subseteq B$ be a ring extension. Following Cohn~\cite{pC68}, we call $A \subseteq B$ an \emph{inert extension} if $xy \in A$ for $x,y \in B^\ast$ implies that $ux, u^{-1}y \in A$ for some $u \in B^\times$. For an inert extension $A \subseteq B$ of integral domains, one can readily verify that $\ii(A) \subseteq B^\times \cup \ii(B)$. Therefore if $A \subseteq B$ is inert and $A^\times = B^\times \cap A$, then $\ii(A) = \ii(B) \cap A$.

\begin{example} \hfill
	\begin{enumerate}
		\item 	If $R$ is an integral domain, then the extension $R \subseteq R[X]$ is inert.
		\smallskip
		
		\item  Furthermore, under the usual notation of the $D+M$ construction, where $T = K + M$ and $R = D + M$, it is not hard to argue that both extensions $D \subseteq R$ and $R \subseteq T$ are inert.
		\smallskip
		
		\item Fix $n \in \zz$ with $n \ge 2$, and then consider the extension $R[X^n] \subseteq R[X]$. It is clear that $R[X^n]^\times = R^\times$. Observe, on the other hand, that although $X^n \in R[X^n]$ there is no $u \in R^\times$ such that $uX \in R[X^n]$. As a result, $R[X^n] \subseteq R[X]$ is not an inert extension.
	\end{enumerate}
\end{example}

Let $D$ be an integral domain, and let $S$ be a multiplicative set of $D$. If $S$ is generated by primes, then the localization extension $D \subseteq D_S$ is inert by \cite[Proposition~1.9]{AAZ92}. Now if $D \subseteq D_S$ is an inert extension, then we know by \cite[Theorem~2.1]{AAZ92} that $D_S$ is atomic provided that $D$ is atomic. Unfortunately, the same statement is no longer true if atomicity is replaced by the IDF property \cite[Example~2.3]{AAZ92} or the TIDF property as we will illustrate now. The ring in the following example is basically the same ring used in~\cite[Example~2.3]{AAZ92}.

\begin{example} \label{ex:TIDF does not trasfer under inert localization}
	Fix $p \in \pp$. We have seen in Example~\ref{ex:nontrivial non-atomic TIDF-domain} that $D = \zz_{(p)} + X \cc \ldb X \rdb$ is a TIDF-domain (since $\zz_{(p)}$ is a TIDF-domain with a nonempty finite set of irreducibles up to associates, it also follows from part~(4) of Theorem~\ref{thm:D+M when $D$ is not a field} that $D$ is a TIDF-domain). One can easily check that $p$ is also prime in~$D$. Thus, if $S$ is the multiplicative set $\{u p^n \mid u \in D^\times \text{ and } n \in \nn_0\}$, then the extension $D \subseteq D_S$ is inert. However, as argued in \cite[Example~2.3]{AAZ92}, the ring $D_S$ is not even an IDF-domain.
\end{example}

A saturated multiplicative subset $S$ of an integral domain $D$ is called \emph{splitting} if each $r \in D$ can be written as $r = as$ for some $a \in D$ and $s \in S$ such that $aD \cap s'D = a s'D$ for all $s' \in S$. If $S$ is a splitting multiplicative set, then the extension $D \subseteq D_S$ is inert by \cite[Proposition~1.5]{AAZ92}. In addition, if $D$ is atomic, then every multiplicative subset of $D$ generated by primes is a splitting multiplicative set. In general, localization does not preserve irreducibles in integral domains as, after all, fields do not contain irreducibles. However, for an integral domain $D$ and a splitting multiplicative subset $S$ of $D$, if $r = as$ with $a \in D$ and $s \in S$, then $a \in \mathcal{A}(D)$ if and only if $r \in \mathcal{A}(D_S)$ \cite[Corollary~1.4(c)]{AAZ92}.

We proceed to establish the statements of both \cite[Theroem~3.1]{AAZ92} and \cite[Theorem~2.4(a)]{AAZ92} for the case of TIDF-domains.

\begin{prop} \label{prop:TIDF under localization}
	Let $D$ be an integral domain, and let $S$ be a splitting multiplicative subset of $D$ generated by primes. Then $D$ is a TIDF-domain if and only if $D_S$ is a TIDF-domain.
\end{prop}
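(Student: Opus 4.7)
The plan is to split the equivalence along the definition: a TIDF-domain is a Furstenberg IDF-domain, so it suffices to transfer the IDF property and the Furstenberg property separately between $D$ and $D_S$. For the IDF half, I would appeal to the two results already invoked in the paragraph preceding the proposition, namely \cite[Theorem~2.4(a)]{AAZ92} for ascent (the splitting hypothesis makes $D\subseteq D_S$ inert by \cite[Proposition~1.5]{AAZ92}) and \cite[Theorem~3.1]{AAZ92} for descent. So the actual content is to handle the Furstenberg property in both directions, and the key tool throughout will be \cite[Corollary~1.4(c)]{AAZ92}: in a splitting decomposition $r=as$, one has $a\in\mathcal{A}(D)$ if and only if $r\in\mathcal{A}(D_S)$.

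For the forward implication, I would take a nonzero nonunit $x\in D_S$, clear denominators (which are units in $D_S$) to replace $x$ by some $r\in D$, and use the splitting property to write $r=at$ with $t\in S$ and $a$ satisfying $aD\cap s'D=as'D$ for every $s'\in S$. Since $t$ is a unit in $D_S$, $a$ is a nonzero nonunit in $D$, so the Furstenberg property of $D$ yields $b\in\mathcal{A}(D)$ with $b\mid_D a$. Looking at the splitting decomposition $b=a_b s_b$ of $b$, atomicity forces one of $a_b, s_b$ to be a unit; if $s_b$ were not a unit then $a_b$ would be, so $b\sim s_b\in S$ would force $b$ to be associate with one of the primes generating $S$, say $p$, and then $p\mid_D a$ together with $aD\cap pD=apD$ would make $p$ a unit, a contradiction. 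Hence $s_b$ is a unit, $b\sim a_b$ in $D$, and \cite[Corollary~1.4(c)]{AAZ92} lifts $b$ to an atom of $D_S$ dividing $x$.

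For the reverse implication, I would fix a nonzero nonunit $r\in D$. If $r/1$ is a unit in $D_S$, then saturation of $S$ places $r$ in $S$, and since $S$ is generated by primes, $r$ is divisible in $D$ by one of those primes (which is irreducible). Otherwise $r/1$ is a nonunit in $D_S$, so the Furstenberg property of $D_S$ yields $\beta\in\mathcal{A}(D_S)$ dividing $r$. After clearing the denominator of $\beta$, I may assume $\beta\in D$ and then take the splitting decomposition $\beta=au$; since $u\in S$ is a unit in $D_S$, $a$ is associate to $\beta$ in $D_S$, and \cite[Corollary~1.4(c)]{AAZ92} gives $a\in\mathcal{A}(D)$. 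The relation $a\mid_{D_S}r$ clears to $ru'=az$ in $D$ for some $u'\in S$ and $z\in D$; but then $ru'\in aD\cap u'D=au'D$ by the splitting property of $a$, so writing $ru'=au'v$ yields $r=av$, whence $a\mid_D r$ as required.

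The main obstacle I expect is in the forward direction, precisely at the step that rules out the atom $b$ of $D$ becoming a unit in $D_S$: without the hypothesis that $S$ is generated by primes, $b$ could be associate in $D$ with an arbitrary nonunit element of $S$ and thereby fail to produce an atom of $D_S$. The combined hypotheses that $S$ is splitting and generated by primes are precisely what make the divisibility bookkeeping go through, via the identity $aD\cap s'D=as'D$ turning $S$-divisibility of products back into ordinary divisibility.
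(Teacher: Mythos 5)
Your proposal is correct and follows essentially the same route as the paper: the IDF half is delegated to \cite[Theorem~2.4(a)]{AAZ92} and \cite[Theorem~3.1]{AAZ92}, and the Furstenberg half in both directions is handled via the splitting decomposition together with \cite[Corollary~1.4]{AAZ92}, with your unit-exclusion argument for the atom $b$ (via $aD\cap pD=apD$) playing the same role as the paper's observation that the splitting part of $r$ lies in the set of elements not divisible by any prime of $S$.
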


\begin{proof}
	Let $B$ be the subset of $D$ consisting of all elements that are not divisible in $D$ by any prime contained in $S$. Fix $r \in D$, and write $r = bs$ for some $b \in D$ and $s \in S$ such that $b D \cap s'D = bs' D$ for all $s' \in S$. In particular, if $p$ is a prime in $S$, then $b D \cap pD = bp D$ and, therefore, $p \nmid_D b$. Thus, $b \in B$. Therefore every element $r \in D$ can be written as $r = bs$ for some $b \in B$ and $s \in S$.
%
	\smallskip
	
	By virtue of \cite[Theroem~3.1]{AAZ92} and \cite[Theorem~2.4(a)]{AAZ92}, the ring $D$ is an IDF-domain if and only if $D_S$ is an IDF-domain. Therefore it suffices to show that $D$ is a Furstenberg domain if and only if $D_S$ is a Furstenberg domain. Assume first that $D$ is a Furstenberg domain. Let $r$ be a nonzero nonunit in $D_S$. After replacing $r$ by one of its associates in $D_S$, we can assume that $r \in D$. By the conclusion in the first paragraph, we can write $r = bs$ for some $b \in B$ and $s \in S$. Observe that $b$ is a nonunit in~$D$. Since $D$ is a Furstenberg domain, there is an $a \in \mathcal{A}(D)$ such that $b = a r'$ for some $r' \in D$. Since none of the primes contained in $S$ can divide $a$ in $D$, it follows that $a \notin S = D_S^\times$. Since $D \subseteq D_S$ is an inert extension, $\mathcal{A}(D) \subseteq D_S^\times \cup \mathcal{A}(D_S)$ by \cite[Lemma~1.1]{AAZ92}, and so $\mathcal{A}(D) \setminus D_S^\times \subseteq \mathcal{A}(D_S)$. Hence $a$ is an irreducible divisor of $r$ in $D_S$. As a result, $D_S$ is Furstenberg.
	\smallskip
	
	Conversely, suppose that $D_S$ is a Furstenberg domain. Take $r$ to be a nonzero nonunit in~$D$. We assume that $p \nmid_D r$ for any $p \in S$ as otherwise we are done. Then $r$ is a nonunit in $D_S$. Since $D_S$ is Furstenberg, there exists $a' \in \mathcal{A}(D_S)$ dividing $r$ in $D_S$. After replacing $a'$ by one of its associates in $D_S$, we can assume that $a' \in D$. As $S$ is splitting, $a' = as$ for some $a \in D$ and $s \in S$ such that $a D \cap s' D = as' D$ for all $s' \in S$. Because $S$ is splitting, it follows from \cite[Corollary~1.4]{AAZ92} that $a \in \mathcal{A}(D)$. Write $r = a \frac{b}{t}$ for some $b \in D$ and $t \in S$. Now the fact that $t \mid_D ab$ implies that $t \mid_D b$. As a result, $a$ is an irreducible divisor of $r$ in $D$. Thus, $D$ is a Furstenberg domain.
\end{proof}

As the following example indicates, both conditions, being splitting and being generated by primes, are needed for Proposition~\ref{prop:TIDF under localization} to hold.

\begin{example} \hfill
	\begin{enumerate}
		\item For a prime $p$, we have seen in Example~\ref{ex:TIDF does not trasfer under inert localization} that $D = \zz_{(p)} + X \cc\ldb X \rdb$ is a TIDF-domain but that its localization at the multiplicative set $S := \{u p^n \mid u \in D^\times \text{ and } n \in \nn_0 \}$ is not a TIDF-domain. Observe that $S$ is not a splitting multiplicative subset of $D$ because every power of $p$ divides~$X$ in $D$ (see \cite[Proposition~1.6]{AAZ92}).
		\smallskip
		
		\item Fix $p \in \pp$, and consider the monoid domain $R := \ff_p[\qq_{\ge 0}]$. Since the monoid $\qq_{\ge 0}$ is a GCD-monoid, $R$ is a GCD-domain by \cite[Theorem~6.4]{GP74}. As $\ff_p$ is a perfect field of characteristic~$p$ and $\qq_{\ge 0}$ is $p$-divisible, $R$ is antimatter, and so $R$ is not a UFD. Hence $D := R[Y]$ is a GCD-domain but not a UFD. Now consider the multiplicative subset $S := R^*$ of $D$. Since $R$ is a GCD-domain,~$S$ is splitting. Observe now that $D_S = F[Y]$, where $F$ is the quotient field of~$R$. Then $D_S$ is a UFD and so a TIDF-domain. However, since $R$ is antimatter, it is not a TIDF-domain, and so $D$ is not a TIDF-domain by Corollary~\ref{cor:TIDF in PSP and MCD-finite}. Finally, observe that $S$ is not generated by primes because $R$ is not a UFD.
	\end{enumerate}
\end{example}
\smallskip

We conclude this subsection with a version of Proposition~\ref{prop:TIDF under localization} for PIDF-domains. The proof we give resemblances that of \cite[Theorem~2.4(a)]{AAZ92}.

\begin{prop} \label{prop:PIDF under localization}
	Let $D$ be an integral domain, and let $S$ be a splitting multiplicative subset of $D$ generated by primes. Then $D$ is a PIDF-domain if and only if $D_S$ is a PIDF-domain.
\end{prop}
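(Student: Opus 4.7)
The plan is to mimic the proof of Proposition~\ref{prop:TIDF under localization}, exploiting the AAZ92 transfer result that $D$ is an IDF-domain if and only if $D_S$ is an IDF-domain, and handling the "powerful" part of the PIDF condition via two injections built from the splitting decomposition. Set the stage as in that earlier proof: let $B$ denote the subset of $D$ consisting of all elements not divisible in $D$ by any prime contained in $S$, and recall that the splitting condition guarantees every nonzero $r \in D$ admits a decomposition $r = bs$ with $b \in B$ and $s \in S$. Because $S$ is splitting, $D \subseteq D_S$ is an inert extension by \cite[Proposition~1.5]{AAZ92}, and one has $B \cap \mathcal{A}(D) \subseteq \mathcal{A}(D_S)$, since $S$ being saturated and generated by primes forces $B \cap S \subseteq D^\times$.

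The technical lemma underlying both directions is the following: if $a_1, a_2 \in B$ are associate in $D_S$, then they are associate in $D$. A relation of the form $s_2 a_1 = s_1 a_2$ with $s_1, s_2 \in S$ gives $s_1 \mid_D s_2 a_1$, and since no prime factor of $s_1$ divides $a_1$, one concludes $s_1 \mid_D s_2$; a symmetric argument shows $s_2 \mid_D s_1$, and hence $a_1$ and $a_2$ are associate in $D$.

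For the forward direction, assume $D$ is a PIDF-domain and take a nonzero $r \in D_S$; after replacing $r$ by an associate in $D_S$ we may assume $r \in D$, and writing $r = bs$ as above we have $\bigcup_{n \in \nn} \mathsf{D}_{D_S}(r^n) = \bigcup_{n \in \nn} \mathsf{D}_{D_S}(b^n)$. I would now build an injection $\Phi$ of this set into $\bigcup_{n \in \nn} \mathsf{D}_D(b^n)$ by the rule $\Phi(a' D_S^\times) = a D^\times$, where $a$ is produced from a representative $a' \in D$ of $a' D_S^\times$ via the splitting decomposition $a' = as$ with $a \in B$ and $s \in S$. Then $a \in \mathcal{A}(D)$ by \cite[Corollary~1.4]{AAZ92}, and the relation $a \mid_{D_S} b^n$ combined with $a \in B$ and each element of $S$ being a product of primes allows one to cancel prime factors of the witnessing denominator in $D$ and conclude $a \mid_D b^n$. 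The technical lemma ensures $\Phi$ is both well-defined on $D_S^\times$-associate classes and injective, so finiteness of $\bigcup_n \mathsf{D}_D(b^n)$ forces finiteness of $\bigcup_n \mathsf{D}_{D_S}(r^n)$.

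For the converse, assume $D_S$ is a PIDF-domain and take $r \in D^*$ with $r = bs$. Every irreducible of $D$ either lies in $B$ or is associate in $D$ to a prime in $S$, so the divisors of $r^n$ split into two kinds. A prime in $S$ dividing $r^n = b^n s^n$ must divide $s$ (as $b \in B$ has no prime factor in $S$), and only finitely many such primes divide $s$ up to associates because $D$ is an IDF-domain by the transfer result of \cite[Theorems~2.4(a) and~3.1]{AAZ92}. For the other kind, if $a \in B \cap \mathcal{A}(D)$ divides $r^n$ in $D$, then the same prime-cancellation trick yields $a \mid_D b^n$, and a fortiori $a \mid_{D_S} b^n$, so the assignment $a D^\times \mapsto a D_S^\times$ injects these classes (by the technical lemma) into the finite set $\bigcup_n \mathsf{D}_{D_S}(r^n)$. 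Combining the two bounds gives that $\bigcup_n \mathsf{D}_D(r^n)$ is finite. The main obstacle is verifying that the splitting decomposition descends cleanly to $D_S^\times$-associate classes, and this is precisely what the technical lemma is designed to handle.
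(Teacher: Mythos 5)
Your proof is correct and follows essentially the same strategy as the paper's: normalize irreducible divisors via the splitting decomposition $a' = as$, use [Corollary~1.4]{AAZ92} to see $a \in \mathcal{A}(D)$, transfer divisibility of powers by cancelling the primes of $S$ (the paper uses the ideal condition $aD \cap sD = asD$ where you cancel primes directly, but these are interchangeable here), and split the converse into irreducibles associate to primes of $S$ versus those in $B$. Your ``technical lemma'' on associate classes is exactly the fact the paper establishes inline when it argues that $y = a_j(s_1/s_2)$ forces $s_1/s_2 \in D^\times$; your write-up just makes the counting via explicit injections rather than leaving it implicit.
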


\begin{proof}
	For the direct implication, fix a nonzero nonunit $x \in D_S$, and let us argue that only finitely many irreducible elements in $D_S$ (up to associates) divide some power of $x$. After replacing $x$ by some of its associates, we can assume that $x \in D$. Take an irreducible divisor $a$ of $x^k$ in $D_S$ for some $k \in \nn$. After replacing $a$ by some of its associates in $D_S$, we can assume that $a \in D$. Furthermore, as $S$ is a splitting multiplicative set, after replacing~$a$ by some element in $aS^{-1}$, we can assume that $aD \cap sD = as D$ for all $s \in S$. The fact that $S$ is splitting also guarantees that $a$ is irreducible in $D$ \cite[Corollary~1.4]{AAZ92}. Now write $x^k = a(d/s)$ for some $d \in D$ and $s \in S$. Then $sx^k = ad \in aD \cap sD = as D$, and so we can take $d' \in D$ such that $sx^k = asd'$; that is, $x^k = ad'$. Thus, $a$ divides $x^k$ in $D$. Hence the fact that $D$ is a PIDF-domain implies that only finitely many irreducibles in $D_S$ (up to associates) divide some power of $x$ in $D_S$. As a result, $D_S$ is a PIDF-domain.
	\smallskip
	
	Conversely, suppose that $D_S$ is a PIDF-domain, and let $x$ be a nonzero nonunit in $D$. Take $y \in D$ to be an irreducible divisor of some power of $x$ in $D$. Now write $x = x' s$ for some $x' \in D$ and $s \in S$ such that no prime in~$S$ divides $x'$ in~$D$. As $D_S$ is a PIDF-domain, only finitely many irreducibles of $D_S$ (up to associates) divide some power of $x'$, namely, $a_1, \dots, a_k$. After replacing $a_1, \dots, a_k$ by suitable associates, we can assume them to be in $D$. Then $a_1, \dots, a_k \in \mathcal{A}(D)$ because $S$ is a splitting multiplicative set of~$D$ \cite[Corollary~1.4]{AAZ92}. If $y \in S$, then $y$ is associate with one of the finitely many primes in $S$ dividing~$s$. Now suppose that $y \notin S$. As $D \subseteq D_S$ is an inert extension, $\mathcal{A}(D) \subseteq D_S^\times \cup \mathcal{A}(D_S)$ and, therefore, $y \in \mathcal{A}(D_S)$. Since $y$ divides some power of $x's$ in $D$, we see that $y$ divides some power of $x'$ in $D$ and so in $D_S$. Then we can take $j \in \ldb 1,k \rdb$ such that $y = a_j (s_1/s_2)$ for some $s_1, s_2 \in S$. Because $S$ is generated by primes, the fact that $a_j, y \in  \mathcal{A}(D) \setminus S$ guarantees that $s_1/s_2 \in D^\times$, and so $y$ and $a_j$ are associates in~$D$. Hence any irreducible divisor of a power of $x$ in $D$ must be associate with either a prime dividing~$s$ or an irreducible in $\{a_1, \dots, a_k\}$. Hence $D$ is a PIDF-domain.
\end{proof}

\bigskip
\section*{Acknowledgments}

The authors are thankful to an anonymous referee for several helpful suggestions that helped to improve an earlier version of this paper. While working on this paper, the first author was kindly supported by the NSF award DMS-1903069.

\bigskip

\end{document}